\newtheorem{theorem}{\bf Theorem}[section]
\newtheorem{exam}[theorem]{\bf Example}
\newtheorem{lemma}[theorem]{\bf Lemma}
\def \R{{\mathbb R}}
\def \C{{\mathbb C}}
\def \QR{\mathbb{Q}_\mathbb{R}}
\def \i{\textit{\textbf{i}}}
\def \j{\textit{\textbf{j}}}
\def \k{\textit{\textbf{k}}}
\newcommand\norm[1]{\left\lVert#1\right\rVert}
\newcommand{\Rn}[1]{%
	\textup{\lowercase\expandafter{\romannumeral#1}}%
}
\newcommand{\beano}{\begin{eqnarray*}}	\newcommand{\eeano}{\end{eqnarray*}}
\renewcommand{\thefootnote}{\fnsymbol{footnote}}
\date{\today}
\title{On the specific solutions of reduced biquaternion equality constrained least squares problem and their  relative forward error bound
	
	\footnotemark[2]}
\author{Sk. Safique Ahmad\footnotemark[1] \and  Neha Bhadala\footnotemark[1]}
\begin{document}
	\maketitle
	\begin{abstract}
		This study focuses on addressing the challenge of solving the reduced biquaternion equality constrained least squares (RBLSE) problem. We develop algebraic techniques to derive real and complex solutions for the RBLSE problem by utilizing the real and complex forms of reduced biquaternion matrices. Furthermore, we propose algorithms and provide a detailed analysis of their computational complexity for finding special solutions to the RBLSE problem. A perturbation analysis is conducted, establishing an upper bound for the relative forward error of these solutions. This analysis ensures the accuracy and stability of the solutions in the presence of data perturbations, which is crucial for practical applications where errors arising from input inaccuracies can cause deviations between computed and true solutions. Numerical examples are presented to validate the proposed algorithms, demonstrate their effectiveness, and verify the accuracy of the established upper bound for the relative forward errors. These findings lay the groundwork for exploring applications in 3D and 4D algebra such as robotics, signal, and image processing, expanding their impact on practical and emerging domains.
		
	\end{abstract}
	
	\noindent {\bf Keywords.} Real representation matrix, Reduced biquaternion, Complex representation matrix, Equality constrained least squares problem.
	
	\noindent {\bf AMS subject classification.} 15A60, 15B33, 65F20, 65H10.
	
	\renewcommand{\thefootnote}{\fnsymbol{footnote}}
	
	\footnotetext[1]{
		Department of Mathematics, Indian Institute of Technology Indore, 453535, Madhya Pradesh, India. \texttt{Contact:safique@iiti.ac.in, bhadalaneha@gmail.com}.}

	\section{Introduction}\label{sec1}
	Reduced biquaternions, first introduced by Segre in $1892$, have become fundamental in various domains such as signal processing, image processing, and neural networks \cite{el2022linear, gai2023theory, gai2021reduced, guo2011reduced}. Unlike standard quaternions, reduced biquaternions, often referred to as commutative quaternions, possess the unique property of commutative multiplication. This property not only simplifies mathematical operations but also significantly boosts computational efficiency, as demonstrated in \cite{pei2004commutative, MR2517402}.
	
	Reduced biquaternions also exhibit significant differences in their algebraic structure compared to traditional quaternions. Two notable elements, \( e_1 \) and \( e_2 \), are defined as:
		\[
		e_1 = \frac{1 + \j}{2}, \quad e_2 = \frac{1 - \j}{2}.
		\]
		These elements satisfy \( e_1e_2 = 0 \), \( e_1^2 = e_1 \), \( e_2^2 = e_2 \), and are idempotent as well as divisors of zero. Any reduced biquaternion element of the form $c_1e_1$ or $c_2e_2$ (where \( c_1 \) and \( c_2 \) are complex numbers) also acts as a divisor of zero, highlighting the non-division algebra property of reduced biquaternions. Consequently, \( \QR \), the reduced biquaternion algebra, does not form a complete division algebra. 
		
		The concept of units in \( \QR \) is also unique. An element \( r \in \QR \) is defined as a unit if there exists another element \( s \in \QR \) such that \( rs = sr = 1 \). However, not every nonzero element in \( \QR \) is a unit, unlike in \( \R \), \( \C \) or in quaternions. For instance, $e_1$ or $e_2$ is not a unit in $\QR$. These differences underline the distinct algebraic framework of reduced biquaternions and motivate their applicability to specific problems in mathematical and physical sciences.
		
		While reduced biquaternions have several advantages, their non-division algebra nature has negligible impact on practical applications. For instance, in \cite{gai2023theory, guo2011reduced, pei2004commutative, MR2517402}, the authors demonstrated the superior performance of reduced biquaternions over traditional quaternions in signal and image processing. This makes reduced biquaternions particularly advantageous for real-time applications in these domains. Moreover, their applicability extends beyond numerical computations to areas such as theoretical physics, including their connection to Maxwell’s equations \cite{MR2377520, guo2022algebraic}.
	
	The wide-ranging applications of reduced biquaternions have spurred significant research into both the theoretical and numerical aspects of matrices and matrix equations involving commutative quaternions. For instance, algorithms for calculating eigenvalues, eigenvectors, and singular value decompositions of reduced biquaternion matrices are detailed in \cite{MR2517402}. Zhang et al. explored the singular value decomposition and generalized inverse of these matrices in \cite{zhang2024singular}, and further investigated the diagonalization process in \cite{MR4685147}, where they established the necessary and sufficient conditions for diagonalization and introduced two numerical methods to facilitate this task. In \cite{MR4730574}, authors discussed the LU decomposition of reduced biquaternion matrices. In \cite{MR4649293}, Guo et al. explored eigen-problems of reduced biquaternion matrices, while in \cite{MR4137050}, Yuan et al. discussed the Hermitian solution of the reduced biquaternion matrix equation $\left(AXB, CXD\right) = \left(E, F\right)$. In \cite{MR4509835}, Ding et al. explored the solutions to matrix equations $XF-AX=BY$ and $XF-A\widetilde{X}=BY$ over commutative quaternions.
	
	In many practical applications, determining the solution to linear system, typically expressed as $AX \approx B$, is a common challenge. The least squares method is a well-known approach for addressing this problem. Prior research, such as the work in \cite{MR4085494}, has explored least squares solutions for matrix equations like $AX = B$ and $AXC = B$ over commutative quaternions. The equality constrained least squares problem is a further study of the least squares problem, and in \cite{MR4532609}, the authors discussed  solution techniques for computing the reduced biquaternion solution of the reduced biquaternion equality constrained least squares (RBLSE) problem. Recent work has extended these studies to quaternions and split quaternions, proposing algorithms for solving split quaternion and quaternion equality constrained least squares problems \cite{MR4619517, MR4509107}.
	
	Despite these advancements, existing studies primarily focus on computational techniques for finding general solutions to LSE problems over quaternions, split quaternions, and reduced biquaternions. However, specialized solutions to the RBLSE problem—such as real and complex solutions—have not been explored. Furthermore, the perturbation analysis of these solutions have not been addressed.
		
		In numerical analysis, the concept of relative forward error is central to evaluating the accuracy and stability of computed solutions, particularly for the RBLSE problem. This metric is crucial in the presence of data perturbations, where errors from machine precision, floating-point arithmetic, or input inaccuracies can lead to deviations between computed and true solutions. The relative forward error quantifies these deviations, providing insights into the sensitivity of solutions to data perturbations and enabling the assessment of solution accuracy and reliability. Understanding these errors is essential in practical applications, such as robotics, image processing, and control systems, where reduced biquaternion algebra plays a key role.
		
		To address these gaps, this study aims to:
		\begin{enumerate}
			\item Develop algebraic techniques to obtain real and complex solutions of the RBLSE problem.
			\item Establish upper bounds for the relative forward error associated with these solutions to ensure accuracy and stability.
			\item Propose efficient algorithms for solving the RBLSE problem and analyze their computational complexity.
			\item Conduct numerical experiments to validate the proposed methods, assess their computational efficiency, and verify the accuracy of the established upper bounds.
		\end{enumerate}
		Among the proposed solutions, the computational complexity analysis reveals that finding the real solution to the RBLSE problem requires less computational time than obtaining the complex solution. Numerical experiments validate the computational efficiency of the algorithms, while the established upper bounds for the relative forward error provide insights into solution sensitivity and reliability under perturbations.
		
		These contributions are vital for advancing the numerical analysis of reduced biquaternions and ensuring the robustness of solutions in applications such as signal processing, color image processing, and robotics. By addressing both computational techniques and perturbation analysis, this study lays the groundwork for further exploration of reduced biquaternion algebra in practical and emerging domains.
	
	The organization of the manuscript is as follows: Section \ref{sec2} covers the notation and preliminary concepts. Section \ref{sec4} outlines the method for finding the real solution of the RBLSE problem, while Section \ref{sec3} focuses on the technique for finding complex solutions to the same problem. Lastly, Section \ref{sec5} presents the numerical validation of the proposed methods.
	
	\section{Notation and preliminaries}\label{sec2}
	\subsection{Notation}
	In this paper, we adopt the following notations: $\QR^{m \times n}$, $\C^{m \times n}$, and $\R^{m \times n}$ represent the sets of all $m \times n$ reduced biquaternion, complex, and real matrices. The notation $\norm{\cdot}_F$ represents the Frobenius norm, which applies to matrices over real numbers, complex numbers, or reduced biquaternions, while $\norm{\cdot}_2$ denotes the 
		$2$-norm, specifically for real or complex matrices.  For matrices $A \in \QR^{m \times n}$ and $B \in \QR^{m \times d}$, $\left[A, B\right]$ represents their concatenation, i.e., $\begin{bmatrix}A & B\end{bmatrix} \in \QR^{m \times (n+d)}$. For a matrix $A \in \C^{m \times n}$, the symbols $A^T$, $A^H$, and $A^{\dagger}$ correspond to the transpose, conjugate transpose, and Moore-Penrose inverse of $A$, respectively. Additionally, $A^{-1}$ indicates the inverse of $A$ if it exists.
	
	The Matlab function $rand(m, n)$ generates an $m \times n$ matrix filled with uniformly distributed random values. The abbreviations RBLSE (reduced biquaternion equality constrained least squares) and LSE (equality constrained least squares) will be used throughout this paper.
	\subsection{Preliminaries}
	A reduced biquaternion is defined as  $\zeta= p_{0}+ p_{1}\i+ p_{2}\j+ p_{3}\k$, which can also be expressed as $\zeta= \left(p_0 + p_1 \i\right)+\left(p_2 + p_3\i\right)\j = r_1 +r_2\j$. Here,
	$p_{i} \in \R$ for $i= 0, 1, 2, 3$ and $r_{i} \in \C$ for $i= 1, 2$. The defining properties include $\i^2=  \k^2= -1$ and $\j^2= 1$, with multiplication rules
	$\i\j= \j\i= \k, \; \j\k= \k\j= \i,$ and $\k\i= \i\k= -\j$. The norm of $\zeta$ is computed as $\norm{\zeta}= \sqrt{p_0^2+ p_1^2+ p_2^2+ p_3^2} = \sqrt{|r_1|^2+|r_2|^2}$. For a matrix $M= \left(m_{ij}\right) \in \QR^{m \times n}$, the Frobenius norm \cite{MR4137050} is given by: 
	\begin{equation}\label{eq2.1}
		\norm{M}_F= \sqrt{\sum_{i= 1}^{m} \sum_{j= 1}^{n} \norm{m_{ij}}^{2}}.
	\end{equation}
	Suppose $M=M_0+ M_1\i+ M_2\j+ M_3\k= N_1+N_2\j \in \QR^{m \times n}$, where $M_i \in \R^{m \times n}$ for $i=0,1,2,3$ and $N_i \in \C^{m \times n}$ for $i=1,2$. The real and complex representations of matrix $M$, denoted as $M^R$ and $M^C$, are given by:
	\begin{equation}\label{eq2.2}
		M^R= \begin{bmatrix}
			M_0  &  -M_1  &  M_2  &  -M_3 \\
			M_1   &   M_0  &  M_3  &   M_2  \\
			M_2   &  -M_3 & M_0  &  -M_1  \\
			M_3  &  M_2  &  M_1  &    M_0 
		\end{bmatrix}, \quad M^C= \begin{bmatrix}
			N_1 & N_2 \\
			N_2 & N_1 
		\end{bmatrix}.
	\end{equation}
	Let $M_c^R$ represent the first block column of the matrix $M^R$. More precisely, $M_c^R$ is defined as: $M^R_c= 
	\left[M_0^T, M_1^T, M_2^T, M_3^T\right]^T.$ The matrix $M^R$ can be represented using $M^R_c$ as:
	\begin{equation}\label{eq2.3}
		M^R= \left[M_c^R, Q_m M_c^R, R_m M_c^R, S_m M_c^R\right], 
	\end{equation}
	where
	\begin{equation}\label{Qm}
		Q_m= \begin{bmatrix}
			0      &   -I_m   &  0     & 0    \\
			I_m  &    0        &  0     & 0    \\
			0      &    0        &  0     & -I_m \\
			0      &    0        &  I_m & 0
		\end{bmatrix}, \quad
		R_m= \begin{bmatrix}
			0      &   0      &  I_m   & 0           \\
			0      &    0     &  0       &   I_m     \\
			I_m  &    0     &  0       & 0           \\
			0      &    I_m &  0       & 0
		\end{bmatrix}, \quad
		S_m= \begin{bmatrix}
			0      &   0      &  0       & -I_m           \\
			0      &    0     &  I_m    &  0    \\
			0      &  -I_m  &  0       & 0           \\
			I_m   &    0     &  0       & 0
		\end{bmatrix}.
	\end{equation}
	The Frobenius norm, as described in \cite{MR4085494, MR4532609}, is defined using the real representation of reduced biquaternion matrices. This definition aligns with the definition provided in \eqref{eq2.1}, ensuring consistency. Specifically, the Frobenius norm of \( M \) satisfies:
	\begin{equation}\label{ec}
		\norm{M}_F= \frac{1}{2} \norm{M^R}_F =  \norm{M^R_c}_F.
	\end{equation}
	\begin{lemma}\label{lem2}
		Let $P \in \R^{m \times n}$, $Q \in \R^{m \times d}$, $R \in \R^{m \times p}$, and $S \in \R^{m \times q}$. If $\|P\|_F=\|Q\|_F=\|R\|_F=\|S\|_F$, then we have
		\begin{equation*}
			\norm{P}_F= \frac{1}{2}\norm{\left[P, Q, R, S\right]}_F.
		\end{equation*}
	\end{lemma}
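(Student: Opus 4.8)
The plan is to reduce everything to the defining sum-of-squares expression for the Frobenius norm. First I would observe that the entries of the concatenated matrix $\left[P, Q, R, S\right]$ are exactly the entries of $P$, $Q$, $R$, and $S$ listed together, since horizontal concatenation does not alter, merge, or duplicate any entry. Hence the squared Frobenius norm is additive over the blocks:
\begin{equation*}
	\norm{\left[P, Q, R, S\right]}_F^2 = \norm{P}_F^2 + \norm{Q}_F^2 + \norm{R}_F^2 + \norm{S}_F^2.
\end{equation*}
This is the only structural fact needed, and it follows immediately from \eqref{eq2.1} (or from the real-matrix special case of it) by partitioning the index set of columns.

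Next I would invoke the hypothesis $\norm{P}_F = \norm{Q}_F = \norm{R}_F = \norm{S}_F$ to collapse the right-hand side to $4\norm{P}_F^2$. Taking nonnegative square roots gives $\norm{\left[P, Q, R, S\right]}_F = 2\norm{P}_F$, which rearranges to the claimed identity $\norm{P}_F = \tfrac{1}{2}\norm{\left[P, Q, R, S\right]}_F$.

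There is essentially no obstacle here: the statement is a one-line consequence of block-additivity of the squared Frobenius norm together with the equal-norm assumption, so the ``hard part'' is merely making the additivity step explicit. The lemma is stated separately because it is the exact identity invoked when passing between $M^R$ and its first block column $M_c^R$ in \eqref{eq2.3}--\eqref{ec}: the four blocks $M_c^R$, $Q_m M_c^R$, $R_m M_c^R$, $S_m M_c^R$ all share the same Frobenius norm (as $Q_m$, $R_m$, $S_m$ are orthogonal permutation-like matrices), so Lemma~\ref{lem2} applies with $P = M_c^R$ and yields $\norm{M^R_c}_F = \tfrac12\norm{M^R}_F$.
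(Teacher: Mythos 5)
Your proof is correct: block-additivity of the squared Frobenius norm plus the equal-norm hypothesis gives $\norm{\left[P,Q,R,S\right]}_F^2 = 4\norm{P}_F^2$, which is exactly the claim. The paper states this lemma without proof, and your argument is precisely the standard one it implicitly relies on (including the intended application to $M^R_c$ and the orthogonal blocks $Q_mM^R_c$, $R_mM^R_c$, $S_mM^R_c$), so there is nothing to add.
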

	Let $M_c^C$ represent the first block column of the matrix $M^C$. More precisely, $M_c^C$ is defined as: $M^C_c= 
	\left[N_1^T, N_2^T\right]^T.$ The matrix $M^C$ can be represented using $M^C_c$ as: 
	\begin{equation}\label{eq2.4}
		M^C= \left[M_c^C, P_m M_c^C\right],  
	\end{equation}
	where 
	\begin{equation}\label{Pm}
		P_m= \begin{bmatrix}
			0 & I_m \\
			I_m & 0
		\end{bmatrix}.
	\end{equation}
	The Frobenius norm, as described in \cite{MR4085494, MR4532609}, is also expressed using the complex representation of reduced biquaternion matrices. This definition is consistent with the previously stated definition \eqref{eq2.1}. Specifically, the Frobenius norm of \( M \) satisfies:
	\begin{equation}\label{er}
		\norm{M}_F= \frac{1}{\sqrt{2}} \norm{M^C}_F =  \norm{M^C_c}_F.
	\end{equation}
	\begin{lemma}\label{lem22}
		Let $P \in \C^{m \times n}$ and $Q \in \C^{m \times d}$. If $\|P\|_F=\|Q\|_F$, then we have
		\begin{equation*}
			\norm{P}_F= \frac{1}{\sqrt{2}}\norm{\left[P, Q\right]}_F.
		\end{equation*}
	\end{lemma}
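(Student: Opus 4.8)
The plan is to reduce everything to the defining sum-of-squares formula for the Frobenius norm and exploit the fact that horizontal concatenation simply appends columns without altering the entries. First I would recall that for any complex matrices $A \in \C^{m\times n}$ and $B \in \C^{m\times d}$ the block matrix $[A,B]$ has as its entries precisely the entries of $A$ together with the entries of $B$, each appearing exactly once. Consequently $\norm{[A,B]}_F^2 = \sum_{i,j}|a_{ij}|^2 + \sum_{i,j}|b_{ij}|^2 = \norm{A}_F^2 + \norm{B}_F^2$; this additivity of the squared Frobenius norm under concatenation is the only structural fact needed.

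Next I would apply this identity with $A = P$ and $B = Q$, obtaining
\begin{equation*}
	\norm{[P,Q]}_F^2 = \norm{P}_F^2 + \norm{Q}_F^2.
\end{equation*}
Invoking the hypothesis $\norm{P}_F = \norm{Q}_F$, the right-hand side collapses to $2\norm{P}_F^2$, so $\norm{[P,Q]}_F^2 = 2\norm{P}_F^2$. Taking nonnegative square roots gives $\norm{[P,Q]}_F = \sqrt{2}\,\norm{P}_F$, and dividing through by $\sqrt{2}$ yields the claimed identity $\norm{P}_F = \frac{1}{\sqrt{2}}\norm{[P,Q]}_F$.

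There is essentially no obstacle here: the result is a one-line computation once the concatenation-additivity of $\norm{\cdot}_F^2$ is noted, and that additivity is immediate from the entrywise definition of the Frobenius norm. If one wanted to be slightly more careful, the only point worth spelling out is that $[P,Q]$ is well-defined (both blocks share the row dimension $m$) and that no entry is duplicated or omitted when passing from the two blocks to the concatenated matrix. I would also remark in passing that Lemma~\ref{lem2} has an identical proof with four blocks of equal Frobenius norm in place of two, which is why both lemmas are stated together.
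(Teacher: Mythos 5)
Your proof is correct: the additivity $\norm{[P,Q]}_F^2=\norm{P}_F^2+\norm{Q}_F^2$ together with the hypothesis $\norm{P}_F=\norm{Q}_F$ gives the claim immediately, and this is exactly the (one-line) argument the paper implicitly relies on, since it states the lemma without proof. Your closing remark that Lemma~\ref{lem2} follows by the same computation with four equal-norm blocks is also accurate.
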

	\begin{lemma}\label{lem2.3}
		For $\alpha \in \R$, $\beta \in \C$, $P, Q \in \QR^{m \times n}$, and $R \in \QR^{n \times t}$, the following relationships hold:
		\begin{enumerate}
			\item[$(a)$]  $P= Q \iff P^C= Q^C \iff P^R=Q^R$. 
			\item[$(b)$] $\left(P+Q\right)^R= P^R+ Q^R$, $\left(P+Q\right)^C= P^C+ Q^C$.
			\item[$(c)$] $\left(\alpha P\right)^R=  \alpha P^R$, $\left(\beta P\right)^C= \beta P^C$.
			\item[$(d)$] $\left(PR\right)^R = P^R R^R$, $\left(PR\right)^C = P^C R^C$.
		\end{enumerate}
	\end{lemma}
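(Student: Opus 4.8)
The plan is to verify the four items one at a time, directly from the defining block formulas \eqref{eq2.2} for $M^R$ and $M^C$ and the multiplication rules of $\QR$. Item $(a)$ is an injectivity statement, the forward implications being trivial since $M\mapsto M^R$ and $M\mapsto M^C$ are functions of $M$; for the converses I would read the component matrices of $P$ and $Q$ off prescribed blocks --- if $P^R=Q^R$, comparing first block columns and using \eqref{eq2.3} gives $P_i=Q_i$ for $i=0,1,2,3$, and if $P^C=Q^C$, the $(1,1)$ and $(1,2)$ blocks give $N_1^P=N_1^Q$ and $N_2^P=N_2^Q$ --- so in either case $P=Q$, and the chain $P^C=Q^C\iff P^R=Q^R$ then follows through $P=Q$.

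Items $(b)$ and $(c)$ are linearity statements and reduce to blockwise identities. Writing $P$ and $Q$ in components with respect to $1,\i,\j,\k$, the components of $P+Q$ are the sums of those of $P$ and $Q$, and the components of $\alpha P$ with $\alpha\in\R$ are $\alpha$ times those of $P$; since every block of $M^R$ in \eqref{eq2.2} is $\pm$ one component matrix, substituting yields $(P+Q)^R=P^R+Q^R$ and $(\alpha P)^R=\alpha P^R$, and the same works for $(\cdot)^C$ starting from $P=N_1^P+N_2^P\,\j$. For $(\beta P)^C=\beta P^C$ with $\beta\in\C$ one additionally uses associativity of $\QR$ to write $\beta P=(\beta N_1^P)+(\beta N_2^P)\,\j$, whose blocks are again in $\C^{m\times n}$.

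The real content is $(d)$. For the complex representation, set $P=N_1+N_2\j$ and $R=L_1+L_2\j$ with $N_i,L_i$ complex matrices, expand $PR$, and collapse it using $X\j=\j X$ for complex $X$ (valid since $\i\j=\j\i$) together with $\j^2=1$; this gives $N_1^{PR}=N_1L_1+N_2L_2$ and $N_2^{PR}=N_1L_2+N_2L_1$, and one checks these are exactly the blocks of the $2\times2$ product $P^CR^C$. For the real representation, the efficient route is to use $(b)$ to reduce to monomials: for a real matrix $A$ and $\mu\in\{1,\i,\j,\k\}$ one has $(A\mu)^R=\Gamma_\mu\otimes A$, where $\Gamma_\mu\in\R^{4\times4}$ is the sign pattern carried by $\mu$ in \eqref{eq2.2} (so $\Gamma_1=I_4$ and $\Gamma_\i\otimes I_m=Q_m$, $\Gamma_\j\otimes I_m=R_m$, $\Gamma_\k\otimes I_m=S_m$ from \eqref{Qm}), and since $\mu\mapsto\Gamma_\mu$ reproduces the $\QR$ multiplication table, expanding $P^R=\sum_\mu\Gamma_\mu\otimes P_\mu$ and $R^R=\sum_\nu\Gamma_\nu\otimes R_\nu$ (with $P_\mu,R_\nu$ the real components) and multiplying gives, term by term, the analogous expansion of $(PR)^R$. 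Alternatively one simply multiplies $PR$ out with the full multiplication table and matches it against the $4\times4$ block product $P^RR^R$. The only real obstacle is bookkeeping --- in the real case sixteen mixed products $P_\mu R_\nu$ with assorted signs must be tracked --- and the regular-representation reduction is precisely what I would use to keep this under control.
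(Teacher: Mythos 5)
The paper states Lemma \ref{lem2.3} without proof, treating these representation identities as standard facts from the cited literature on reduced biquaternion matrices, so there is no in-paper argument to compare against; judged on its own, your proposal is correct and complete. Items $(a)$--$(c)$ are handled exactly as they should be, the complex half of $(d)$ is a correct direct expansion using $\j^2=1$ and the commutativity $N\j=\j N$, and the regular-representation reduction for the real half of $(d)$ is sound: writing $P^R=\sum_\mu\Gamma_\mu\otimes P_\mu$ and invoking the mixed-product property $(\Gamma_\mu\otimes P_\mu)(\Gamma_\nu\otimes R_\nu)=(\Gamma_\mu\Gamma_\nu)\otimes(P_\mu R_\nu)$ collapses the sixteen terms cleanly, and one checks from \eqref{eq2.2} that $\Gamma_1=I_4$, $\Gamma_\i^2=-I_4$, $\Gamma_\j^2=I_4$, $\Gamma_\i\Gamma_\j=\Gamma_\j\Gamma_\i=\Gamma_\k$, $\Gamma_\j\Gamma_\k=\Gamma_\i$, $\Gamma_\k\Gamma_\i=-\Gamma_\j$, so the map $\mu\mapsto\Gamma_\mu$ does reproduce the multiplication table of $\QR$. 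The only step you assert rather than verify is precisely this multiplicativity of $\mu\mapsto\Gamma_\mu$; it is a finite check of a handful of products of $4\times4$ sign matrices and should be recorded explicitly, since it is the entire content of the real case, but it does go through. Your approach also quietly uses that the real component matrices commute with the units $1,\i,\j,\k$ when regrouping $P_\mu\mu R_\nu\nu=P_\mu R_\nu\,\mu\nu$, which holds here and is worth a sentence.
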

	
	In the context of solving constrained least squares problems, the following result provides the closed-form solution for the complex LSE problem. This theorem, based on the QR factorization technique, is foundational for deriving efficient algorithms for more generalized problems discussed later in this paper. The result is formally stated as follows:
		\begin{lemma}{\rm \cite{MR3024913}} \label{thm2.4}
			Consider the complex LSE problem:
			\begin{equation}\label{eq2.4.1}
				\min_X \|A X - B\|_F \quad \text{subject to} \quad C X = D,
			\end{equation}
			where \( A \in \mathbb{C}^{m \times n} \), \( B \in \mathbb{C}^{m \times d} \), \( C \in \mathbb{C}^{p \times n} \), and \( D \in \mathbb{C}^{p \times d} \). Assume that the matrix \( C \) has full row rank. Let the QR factorization of \( C^H \) be given by:
			\begin{equation}\label{eq2.4.2}
				C^H = Q \begin{bmatrix}
					R \\ 
					0
				\end{bmatrix},
			\end{equation}
			where \( Q \in \mathbb{C}^{n \times n} \) is a unitary matrix and \( R \in \mathbb{C}^{p \times p} \) is a nonsingular upper triangular matrix. Further, partition the product \( A Q \) as:
			\begin{equation}\label{eq2.4.3}
				A Q = \left[ P_1, P_2\right],
			\end{equation}
			where \( P_1 \in \mathbb{C}^{m \times p} \) and \( P_2 \in \mathbb{C}^{m \times (n - p)} \).
			
			The unique solution \( X \in \mathbb{C}^{n \times d} \) with minimum norm for the complex LSE problem \eqref{eq2.4.1} is given by:
			\begin{equation}\label{eq2.4.4}
				X = Q \begin{bmatrix}
					\left(R^H\right)^{-1} D \\ 
					P_2^\dagger \left(B - P_1 \left(R^H\right)^{-1} D\right)
				\end{bmatrix}.
			\end{equation}
		\end{lemma}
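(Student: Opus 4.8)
The plan is to reduce the constrained problem to an unconstrained least squares problem by a change of variables that decouples the constraint from the objective. First I would use the QR factorization \eqref{eq2.4.2} of $C^H$ to introduce new coordinates $Y = Q^H X$, so that $X = QY$ since $Q$ is unitary. Writing $Y = \begin{bmatrix} Y_1 \\ Y_2 \end{bmatrix}$ with $Y_1 \in \mathbb{C}^{p \times d}$ and $Y_2 \in \mathbb{C}^{(n-p) \times d}$, the constraint $CX = D$ becomes $C Q Y = D$. Since $C = \begin{bmatrix} R^H & 0 \end{bmatrix} Q^H$ from transposing \eqref{eq2.4.2}, we get $C Q = \begin{bmatrix} R^H & 0 \end{bmatrix}$, so the constraint collapses to $R^H Y_1 = D$. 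Because $R$ is nonsingular, this pins down $Y_1 = (R^H)^{-1} D$ uniquely and leaves $Y_2$ completely free.

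Next I would rewrite the objective in the new coordinates. Using the partition \eqref{eq2.4.3}, $AX = AQY = \begin{bmatrix} P_1 & P_2 \end{bmatrix} \begin{bmatrix} Y_1 \\ Y_2 \end{bmatrix} = P_1 Y_1 + P_2 Y_2$. Substituting the forced value of $Y_1$, the objective becomes $\|P_2 Y_2 - (B - P_1 (R^H)^{-1} D)\|_F$, which is an ordinary unconstrained least squares problem in $Y_2$. The minimum-norm solution of $\min_{Y_2} \|P_2 Y_2 - G\|_F$ with $G = B - P_1 (R^H)^{-1} D$ is the standard Moore--Penrose formula $Y_2 = P_2^\dagger G$. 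Assembling $Y = \begin{bmatrix} (R^H)^{-1} D \\ P_2^\dagger (B - P_1 (R^H)^{-1} D) \end{bmatrix}$ and transforming back via $X = QY$ gives exactly \eqref{eq2.4.4}.

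Finally I would verify the two optimality claims — that this $X$ is feasible, a minimizer, and of minimum norm among all minimizers. Feasibility and the minimizing property follow immediately from the construction above since the change of variables is an exact reformulation. For the minimum-norm claim I would observe that $Q$ is unitary, so $\|X\|_F = \|Y\|_F$, and among all feasible minimizers the block $Y_1$ is fixed while $Y_2$ ranges over the solution set of the unconstrained least squares problem; the Moore--Penrose solution is precisely the minimum-norm element of that set, hence it minimizes $\|Y_2\|_F$ and therefore $\|Y\|_F^2 = \|Y_1\|_F^2 + \|Y_2\|_F^2$ as well.

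I do not anticipate a genuine obstacle here, as the argument is the classical null-space method for equality-constrained least squares; the only point requiring a little care is confirming that the Moore--Penrose solution $Y_2 = P_2^\dagger G$ is simultaneously the least squares solution \emph{and} the minimum-norm one, which is exactly the defining property of $P_2^\dagger$ and needs no rank assumption on $P_2$. The full-row-rank hypothesis on $C$ is used only to guarantee that $R$ is square and nonsingular, so that $Y_1$ is uniquely determined and the QR factorization \eqref{eq2.4.2} has the stated form.
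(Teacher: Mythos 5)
Your proposal is correct and follows essentially the same route as the paper's proof: the change of variables $Y = Q^H X$, the collapse of the constraint to $R^H Y_1 = D$ via the QR factorization of $C^H$, and the reduction of the objective to an unconstrained least squares problem in $Y_2$ solved by the Moore--Penrose inverse. Your additional remarks justifying the minimum-norm claim (unitary invariance of the Frobenius norm and the block decomposition $\|Y\|_F^2 = \|Y_1\|_F^2 + \|Y_2\|_F^2$) are a welcome elaboration of a point the paper leaves implicit.
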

		\begin{proof}
			Equation \eqref{eq2.4.1} can be rewritten as
			\begin{equation}\label{eq2.4.5}
				\min_{X}\norm{AQQ^HX-B}_F \quad \textrm{subject to} \quad  CQQ^HX=D.
			\end{equation}
			Set
			\begin{equation*}
				Q^H X = \begin{bmatrix}
					Y \\ 
					Z
				\end{bmatrix},
			\end{equation*}
			where \( Y \in \mathbb{C}^{p \times d} \) and \( Z \in \mathbb{C}^{(n - p) \times d} \). Utilizing \eqref{eq2.4.2}, we have
			\begin{equation*}
				CQQ^HX=D \implies \begin{bmatrix}
					R^H & 0
				\end{bmatrix}\begin{bmatrix}
					Y \\
					Z
				\end{bmatrix}=D.
			\end{equation*} Since $R^H$ is a nonsingular matrix, we get $Y=\left(R^H\right)^{-1}D$. Using \eqref{eq2.4.3}, equation \eqref{eq2.4.5} takes the form
			\begin{equation*}
				\min_{Z }\norm{P_2Z-\left(B-P_1Y\right)}_F .
			\end{equation*} The minimum norm solution of the above least squares problem is  $Z=P_2^{\dagger}\left(B-P_1Y\right)$. 
			
			Finally, substituting \( Y \) and \( Z \) back into \( Q^H X = \begin{bmatrix} Y \\ Z \end{bmatrix} \), we can derive the desired expression for $X$.
			This completes the proof.
	\end{proof}
	
	\section{An algebraic technique for real solution of RBLSE problem}\label{sec4}
	This section focuses on an algebraic approach to derive the real solution for the RBLSE problem. The method is based on analyzing the solution of the associated real LSE problem. Suppose
	\begin{eqnarray}
		A&=& A_0+ A_1\i+ A_2\j+ A_3\k \in  \QR^{m \times n}, \quad B= B_0+ B_1\i+ B_2\j+ B_3\k \in \QR^{m \times d}, \label{eq3.1} \\ 
		C&=& C_0+ C_1\i+ C_2\j+ C_3\k \in  \QR^{p \times n}, \quad D= D_0+ D_1\i+ D_2\j+ D_3\k \in \QR^{p \times d}. \label{eq3.2}
	\end{eqnarray}
	We will limit our discussion to the scenario where $m \geq n+d$, and matrix $C^R_c$ has full row rank. The RBLSE problem can then be stated as follows:
	\begin{equation}\label{eq6.1}
		\min_{X \in \R^{n \times d}}\norm{AX-B}_F \quad \textrm{subject to} \quad  CX=D. 
	\end{equation} 
	Consider a real LSE problem 
	\begin{equation}\label{eq6.2}
		\min_{X \in \R^{n \times d}}\norm{A^R_cX-B^R_c}_F \quad \textrm{subject to} \quad  C^R_cX=D^R_c.
	\end{equation}
	\begin{theorem}\label{thm3.3}
		Consider the RBLSE problem outlined in \eqref{eq6.1} and the real LSE problem in \eqref{eq6.2}. Let the QR factorization of \( \left(C^R_c\right)^T \) be given by:
			\begin{equation}\label{eq6.3}
				\left(C^R_c\right)^T = \bar{Q}\begin{bmatrix}
					\bar{R} \\
					0             
				\end{bmatrix},
			\end{equation}
			where \( \bar{Q} \in \mathbb{R}^{n \times n} \) is an orthonormal matrix and \( \bar{R} \in \mathbb{R}^{4p \times 4p} \) is a nonsingular upper triangular matrix. Further, partition the product \( A^R_c \bar{Q} \) as:
			\begin{equation}\label{eq6.4}
				A^R_c \bar{Q} = \left[\bar{P}_1, \bar{P}_2\right],
			\end{equation}
			where \( \bar{P}_1 \in \mathbb{R}^{4m \times 4p} \) and \( \bar{P}_2 \in \mathbb{R}^{4m \times (n - 4p)} \).
		
		Let \( X \in \mathbb{R}^{n \times d} \). Then, $X$ is a real solution of the RBLSE problem \eqref{eq6.1} if and only if \( X \) solves the real LSE problem \eqref{eq6.2}. In this scenario, the unique solution \( X \) with minimum norm can be expressed as:
		\begin{equation}\label{eq6.5}
			X = \bar{Q} \begin{bmatrix}
				\left(\bar{R}^T\right)^{-1} D^R_c \\ 
				\bar{P}_2^\dagger \left(B^R_c - \bar{P}_1 \left(\bar{R}^T\right)^{-1} D^R_c\right)
			\end{bmatrix}.
		\end{equation}
	\end{theorem}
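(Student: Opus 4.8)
The plan is to reduce the reduced biquaternion problem \eqref{eq6.1} to the real problem \eqref{eq6.2} and then invoke Lemma \ref{thm2.4} (the closed-form solution of the complex/real LSE problem) to obtain the explicit formula \eqref{eq6.5}. The first step is to establish the equivalence ``$X \in \R^{n\times d}$ solves \eqref{eq6.1} $\iff$ $X$ solves \eqref{eq6.2}''. For this I would argue that, for real $X$, the constraint $CX=D$ holds in $\QR$ if and only if it holds componentwise on the four real blocks; equivalently, using Lemma \ref{lem2.3}(a) and the multiplicativity in Lemma \ref{lem2.3}(d) together with \eqref{eq2.3}, $CX=D \iff C^R X^R = D^R$, and since $X$ is real, $X^R$ is block-diagonal-like so that this is equivalent to $C^R_c X = D^R_c$ (just the first block column). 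A parallel statement handles the objective: using \eqref{ec} and Lemma \ref{lem2.3}(d),
\begin{equation*}
\norm{AX-B}_F = \norm{(AX-B)^R_c}_F = \norm{A^R_c X - B^R_c}_F,
\end{equation*}
because $(AX)^R = A^R X^R$ and for real $X$ one has $(AX)^R_c = A^R_c X$; here I must be slightly careful to check that $(AX)^R_c$ — the first block column of $(AX)^R$ — indeed equals $A^R_c X$ when $X$ is real, which follows from the structure \eqref{eq2.3} because $X^R$'s first block column is $[X^T,0,0,0]^T$ up to the identification, so only $A^R_c$ survives. Granting these two reductions, the feasible sets and the objective values of \eqref{eq6.1} (restricted to real $X$) and \eqref{eq6.2} coincide, hence so do their solution sets and their minimum-norm solutions.

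The second step is to apply Lemma \ref{thm2.4} to the real LSE problem \eqref{eq6.2} with data $A^R_c \in \R^{4m\times n}$, $B^R_c\in\R^{4m\times d}$, $C^R_c\in\R^{4p\times n}$, $D^R_c\in\R^{4p\times d}$. The lemma is stated over $\C$, but its proof uses only orthogonality/unitarity, the Moore–Penrose inverse, and nonsingular triangular factors, so it applies verbatim over $\R$ with $\cdot^H$ replaced by $\cdot^T$. The hypothesis needed is that $C^R_c$ has full row rank, which is exactly what we assumed; the QR factorization \eqref{eq6.3} of $(C^R_c)^T$ then has $\bar R \in \R^{4p\times 4p}$ nonsingular upper triangular, and partitioning $A^R_c\bar Q = [\bar P_1,\bar P_2]$ as in \eqref{eq6.4}, formula \eqref{eq2.4.4} of Lemma \ref{thm2.4} gives precisely \eqref{eq6.5}. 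Finally I would note that $m \ge n+d$ guarantees the overall dimensions are consistent (in particular $4m \ge n$, so $\bar P_2$ has at least as many rows as columns and $\bar P_2^\dagger$ behaves as the relevant least-squares inverse), and uniqueness of the minimum-norm solution is inherited directly from Lemma \ref{thm2.4}.

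The main obstacle I anticipate is the bookkeeping in the first step: verifying cleanly that passing from the full real representation $M^R$ to its first block column $M^R_c$ is compatible with multiplication on the right by a \emph{real} matrix, i.e. that $(AX)^R_c = A^R_c X$ and $(CX)^R_c = C^R_c X$ for real $X$, and similarly that the constraint over $\QR$ collapses to the single block-column identity. This is where the idempotent/zero-divisor subtleties of $\QR$ could in principle bite, but because $X$ is constrained to be real the cross terms vanish and the argument goes through; still, it needs to be spelled out carefully using \eqref{eq2.2}, \eqref{eq2.3} and Lemma \ref{lem2.3} rather than asserted. Everything after that is a direct substitution into Lemma \ref{thm2.4}.
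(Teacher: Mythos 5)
Your proposal is correct and follows essentially the same route as the paper: reduce \eqref{eq6.1} to the real LSE problem \eqref{eq6.2} via the real representation (using that $X^R$ is block diagonal for real $X$, so $(AX)^R_c = A^R_c X$ and $(CX)^R_c = C^R_c X$), then apply Lemma \ref{thm2.4} with $\cdot^H$ replaced by $\cdot^T$. The only cosmetic difference is that you invoke the identity $\norm{M}_F = \norm{M^R_c}_F$ from \eqref{ec} directly, whereas the paper reaches the same conclusion through the four-block expansion $\norm{M}_F = \tfrac{1}{2}\norm{M^R}_F$ together with the orthogonality of $Q_m$, $R_m$, $S_m$ and Lemma \ref{lem2}.
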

	\begin{proof}
		If $X \in \R^{n \times d}$ is a solution of the real LSE problem \eqref{eq6.2}, then 
		\begin{equation}\label{p11}
			\norm{A^R_cX-B^R_c}_F= \textrm{min},  \quad C^R_cX=D^R_c.
		\end{equation}
		The Frobenius norm of a real matrix remains invariant under orthogonal transformations. Since the matrices $Q_m$, $R_m$, and $S_m$ in \eqref{Qm} are orthogonal, it follows that:
		\begin{equation*}
			\norm{A^R_cX-B^R_c}_F = \norm{Q_m\left(A^R_cX-B^R_c\right)}_F = \norm{R_m\left(A^R_cX-B^R_c\right)}_F =\norm{S_m\left(A^R_cX-B^R_c\right)}_F .
		\end{equation*}
		Using equations \eqref{eq2.2}, \eqref{eq2.3}, \eqref{ec}, along with Lemmas \ref{lem2} and \ref{lem2.3}, we obtain
		\begin{eqnarray*}
			\norm{A^R_cX-B^R_c}_F &=& \frac{1}{2}\norm{\left[\left(A^R_cX-B^R_c\right), Q_m\left(A^R_cX-B^R_c\right), R_m\left(A^R_cX-B^R_c\right), S_m\left(A^R_cX-B^R_c\right)\right]}_F \\
			&=& \frac{1}{2} \norm{\left[A^R_cX, Q_mA^R_cX, R_mA^R_cX, S_mA^R_cX\right]-\left[B^R_c, Q_mB^R_c, R_mB^R_c, S_m B^R_c\right]}_F \\
			&=& \frac{1}{2} \norm{\left[A^R_c, Q_mA^R_c, R_mA^R_c, S_m A^R_c\right]\begin{bmatrix}
					X & 0 & 0 & 0\\
					0 & X & 0 & 0\\
					0 & 0 & X & 0\\
					0 & 0 & 0 & X
				\end{bmatrix}-\left[B^R_c, Q_mB^R_c, R_mB^R_c, S_mB^R_c\right]}_F \\
			&=&  \frac{1}{2} \norm{A^R X^R-B^R}_F \\
			&=&  \frac{1}{2} \norm{\left(A X-B\right)^R}_F \\
			&=&  \norm{A X-B}_F.
		\end{eqnarray*}
		From \eqref{p11}, we obtain
		\begin{equation}\label{p12}
			\norm{A^R_cX-B^R_c}_F  = \norm{A X-B}_F  = \textrm{min},
		\end{equation}
		and
		\begin{equation*}
			\left[C^R_c, Q_p C^R_c, R_p C^R_c, S_p C^R_c\right]	\begin{bmatrix}
				X & 0 & 0 & 0\\
				0 & X & 0 & 0\\
				0 & 0 & X & 0\\
				0 & 0 & 0 & X
			\end{bmatrix} = \left[D^R_c, Q_p D^R_c, R_p D^R_c, S_p D^R_c\right].
		\end{equation*}
		Using \eqref{eq2.3}, we know that $C^R = \left[C^R_c, Q_p C^R_c, R_p C^R_c, S_p C^R_c\right]$ and $D^R=\left[D^R_c, Q_p D^R_c, R_p D^R_c, S_p D^R_c\right]$. Applying this, we get
		\begin{eqnarray}
			C^R X^R&=& D^R, \nonumber\\
			(CX)^R&=& D^R, \nonumber\\
			CX&=& D. \label{p13}
		\end{eqnarray}
		By combining \eqref{p12} and \eqref{p13}, we conclude that 
		$X  \in  \R^{n \times d}$ is a real solution to the RBLSE problem \eqref{eq6.1}, and vice versa. 
		
		To determine the expression for \( X \), we solve the real LSE problem given in \eqref{eq6.2}. Using Lemma \ref{thm2.4}, we derive the explicit expression for \( X \).
	\end{proof}
	Next, we aim to examine how perturbations in $A$, $B$, $C$, and $D$ affect the real solution $X_{RL}$ of the RBLSE problem \eqref{eq6.1}. Let 
	\begin{equation}\label{rdeta1}
		\widehat{A} = A+\Delta A, \quad \widehat{B}=B+\Delta B, \quad \widehat{C}=C+\Delta C, \quad \textrm{and} \quad \widehat{D}=D+\Delta D,
	\end{equation}	
	where $\Delta A$, $\Delta B$, $\Delta C$, and $\Delta D$ represent the perturbations of the input data $A$, $B$, $C$, and $D$, respectively. We assume that the perturbations $\Delta A$, $\Delta B$, $\Delta C$, and $\Delta D$ are small enough to ensure that the perturbed matrix $\widehat{C}^R_c$ retains full row rank. These perturbations are measured normwise by the smallest $\epsilon$ for which
	\begin{equation}\label{deta2}
		\norm{\Delta A}_F \leq \epsilon \norm{A}_F, \quad \norm{\Delta B}_F \leq \epsilon \norm{B}_F, \quad \norm{\Delta C}_F \leq \epsilon \norm{C}_F, \quad \textrm{and} \quad \norm{\Delta D}_F \leq \epsilon \norm{D}_F.
	\end{equation}
	Let $\widehat{X}_{RL}$ be the real solution to the perturbed RBLSE problem 
	\begin{equation}\label{perr1}
		\min_{X \in \R^{n \times d}}\norm{\widehat{A}X-\widehat{B}}_F \quad  \textrm{subject to} \quad \widehat{C}X=\widehat{D},
	\end{equation} 
	and let $\Delta X_{RL}= \widehat{X}_{RL}-X_{RL}$. 
	\begin{theorem}\label{thm4.8}
		Consider the RBLSE problem outlined in \eqref{eq6.1} and the perturbed RBLSE problem described in \eqref{perr1}. If the perturbations $\Delta A$, $\Delta B$, $\Delta C$, and $\Delta D$ are sufficiently small, as described in \eqref{deta2}, then we have
		\begin{equation} \label{url}
			\begin{split}
				\frac{\norm{\Delta X_{RL}}_F}{\norm{X_{RL}}_F} \leq \epsilon &\left(\mathcal{K}_A^R\left(\frac{\norm{D^R_c}_F}{\norm{C^R_c}_F\norm{X_{RL}}_F}+1\right) + \mathcal{K}_{B}^R\left(\frac{\norm{B^R_c}_F}{\norm{A^R_c}_F \norm{X_{RL}}_F}+1\right)\right. \\
				&\left. +\left(\mathcal{K}_B^R\right)^2\left(\frac{\norm{C^R_c}_F}{\norm{A^R_c}_F}\norm{A^R_c\mathcal{L}_r}_2+1\right)\frac{\norm{R_r}_F}{\norm{A^R_c}_F\norm{X_{RL}}_F}\right)+O(\epsilon^2) \equiv U_{RL},
			\end{split}
		\end{equation}
		where
		\begin{eqnarray*}
			\mathcal{K}_B^R&=&\norm{A^R_c}_F \norm{\left(A^R_cP_r\right)^{\dagger}}_2, \; \; \; \mathcal{K}_A^R=\norm{C^R_c}_F \norm{\mathcal{L}_r}_2, \; \; \;  \mathcal{L}_r =\left(I_n-\left(A^R_cP_r\right)^{\dagger}A^R_c\right)\left(C^R_c\right)^{\dagger}, \; \; \; \;  \\     P_r&=&I_n-\left(C^R_c\right)^{\dagger}C^R_c, \; \; \; \; \; \; \; \; \; \; \; R_r=B^R_c-A^R_cX_{RL}.
		\end{eqnarray*}
	\end{theorem}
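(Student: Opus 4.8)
The plan is to reduce the perturbation analysis of the RBLSE problem \eqref{eq6.1} to a perturbation analysis of the real LSE problem \eqref{eq6.2}, and then to invoke a standard first-order perturbation bound for the equality constrained least squares problem. By Theorem \ref{thm3.3}, the real solution $X_{RL}$ coincides with the minimum-norm solution of \eqref{eq6.2}, and likewise $\widehat{X}_{RL}$ is the minimum-norm solution of the real LSE problem with data $\widehat{A}^R_c, \widehat{B}^R_c, \widehat{C}^R_c, \widehat{D}^R_c$. So the first step is to translate the reduced-biquaternion perturbation hypotheses \eqref{deta2} into perturbation bounds on the real representation blocks: using Lemma \ref{lem2.3}$(b)$–$(c)$ together with the norm identity \eqref{ec} (i.e. $\norm{M}_F=\norm{M^R_c}_F$), one gets $\norm{\Delta A^R_c}_F = \norm{\Delta A}_F \le \epsilon\norm{A}_F = \epsilon\norm{A^R_c}_F$, and similarly for $B$, $C$, $D$. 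Thus the real-data perturbations inherit exactly the same relative size $\epsilon$.

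Next I would apply the known first-order perturbation result for the (real) LSE problem $\min\norm{\mathcal{A}X-\mathcal{B}}_F$ s.t. $\mathcal{C}X=\mathcal{D}$ with $\mathcal{C}$ of full row rank — the bound that produces the condition-number-type quantities $\mathcal{K}_A^R$, $\mathcal{K}_B^R$, the oblique projector $P_r = I - \mathcal{C}^\dagger\mathcal{C}$, the matrix $\mathcal{L}_r = (I - (\mathcal{A}P_r)^\dagger\mathcal{A})\mathcal{C}^\dagger$, and the residual $R_r = \mathcal{B} - \mathcal{A}X_{RL}$. With $\mathcal{A}=A^R_c$, etc., this is precisely the structure appearing in \eqref{url}: the term with $\mathcal{K}_A^R$ governs sensitivity to $\Delta C, \Delta D$, the term with $\mathcal{K}_B^R$ governs sensitivity to $\Delta A, \Delta B$, and the $(\mathcal{K}_B^R)^2$ term (carrying the residual factor $\norm{R_r}_F$) is the characteristic least-squares second-order-conditioning contribution that is present whenever the residual is nonzero. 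One normalizes each absolute perturbation by $\norm{X_{RL}}_F$, substitutes $\norm{\Delta A^R_c}_F\le\epsilon\norm{A^R_c}_F$ etc., and collects terms; the cross-terms and the dependence of $\widehat{P}_r$, $\widehat{\mathcal{L}}_r$ on the perturbations are all $O(\epsilon)$ relative corrections, so they contribute only to the $O(\epsilon^2)$ remainder.

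The main obstacle is establishing (or correctly citing) the first-order LSE perturbation bound in exactly the form needed, including the verification that $\widehat{\mathcal{C}}$ retains full row rank for small $\epsilon$ (guaranteed by the standing assumption after \eqref{perr1}) and that the minimum-norm selection is stable under perturbation — i.e. that the perturbed minimum-norm solution is close to $X_{RL}$ rather than jumping to a different point of the (here unique) solution set. Since $m\ge n+d$ and $C^R_c$ has full row rank, the constrained problem \eqref{eq6.2} has a unique minimizer given by \eqref{eq6.5}, so I would differentiate that closed form: write $\widehat{X}_{RL}$ via \eqref{eq6.5} with perturbed data, expand $(\widehat{\mathcal{C}}^\dagger)$, $(\widehat{\mathcal{A}P_r})^\dagger$, and the QR factors to first order using the standard formulas $\Delta(\mathcal{C}^\dagger) = -\mathcal{C}^\dagger(\Delta\mathcal{C})\mathcal{C}^\dagger + (\text{terms with projectors})$, and bound each resulting term in Frobenius/spectral norm. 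Careful bookkeeping of which projector ($P_r$ or $I-\mathcal{A}$-weighted) multiplies each $\Delta$ term is what yields the precise coefficients $\mathcal{K}_A^R$, $\mathcal{K}_B^R$, $(\mathcal{K}_B^R)^2$; the rest is routine triangle-inequality and submultiplicativity estimates, with all quadratic-and-higher contributions absorbed into $O(\epsilon^2)$.
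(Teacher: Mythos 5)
Your proposal follows essentially the same route as the paper: reduce the perturbed RBLSE problem to the perturbed real LSE problem via Theorem \ref{thm3.3}, transfer the perturbation bounds \eqref{deta2} to the real representations using Lemma \ref{lem2.3} and the norm identity \eqref{ec}, and then invoke the standard first-order perturbation bound for the real equality constrained least squares problem (which the paper simply cites from Cox and Higham rather than re-deriving as you sketch). The argument is correct.
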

	\begin{proof}
		The perturbed real LSE problem corresponding to the perturbed RBLSE problem \eqref{perr1} is given by:
		\begin{equation}\label{per4}
			\min_{X \in \R^{n \times d}}\norm{\left(\widehat{A}\right)^R_cX-\left(\widehat{B}\right)^R_c}_F \quad \textrm{subject to} \quad \left(\widehat{C}\right)^R_cX=\left(\widehat{D}\right)^R_c.
		\end{equation}
		Using Theorem \ref{thm3.3}, we know that $\widehat{X}_{RL}$ is the solution to the perturbed real LSE problem \eqref{per4}. From \eqref{rdeta1} and utilizing Lemma \ref{lem2.3}, we have
		$$\left(\widehat{A}\right)^R_c = A^R_c+\left(\Delta A\right)^R_c, \quad \left(\widehat{B}\right)^R_c = B^R_c+\left(\Delta B\right)^R_c, \quad \left(\widehat{C}\right)^R_c = C^R_c+\left(\Delta C\right)^R_c, \quad \left(\widehat{D}\right)^R_c = D^R_c+\left(\Delta D\right)^R_c.$$  
		Thus, the perturbed real LSE problem \eqref{per4} can be rewritten as:
		\begin{equation}\label{prper3}
			\min_{X \in \R^{n \times d}}\norm{\left(A^R_c+\left(\Delta A\right)^R_c\right)X-\left(B^R_c+\left(\Delta B\right)^R_c\right)}_F \quad  \textrm{subject to} \quad \left(C^R_c+\left(\Delta C\right)^R_c\right)X=\left(D^R_c+\left(\Delta D\right)^R_c\right).
		\end{equation}
		Using \eqref{ec} and \eqref{deta2}, we can establish the following bounds for the perturbation:
		\begin{equation}\label{prper4}
			\norm{\left(\Delta A\right)^R_c}_F \leq \epsilon \norm{A^R_c}_F, \quad \norm{\left(\Delta B\right)^R_c}_F \leq \epsilon \norm{B^R_c}_F, \quad \norm{\left(\Delta C\right)^R_c}_F \leq \epsilon \norm{C^R_c}_F, \quad \norm{\left(\Delta D\right)^R_c}_F \leq \epsilon \norm{D^R_c}_F.
		\end{equation}
		With the perturbed problem \eqref{prper3} and the bounds in \eqref{prper4}, and using Theorem \ref{thm3.3}, the sensitivity analysis of the real solution to the RBLSE problem \eqref{eq6.1} reduces to evaluating the sensitivity of the solution to the real LSE problem \eqref{eq6.2}. Consequently, the upper bound $U_{RL}$ for the relative forward error of the real solution to the RBLSE problem can be obtained from \cite[Equation $4.11$]{MR1682424}.
	\end{proof}

	\section{An algebraic technique for complex solution of RBLSE problem}\label{sec3}
	This section focuses on an algebraic approach to derive the complex solution for the RBLSE problem. The method is based on analyzing the solution of the associated complex LSE problem. Suppose
	\begin{eqnarray}
		A&=& M_1+M_2\j \in  \QR^{m \times n}, \quad B= N_1+N_2\j \in \QR^{m \times d}, \label{eq4.1} \\ 
		C&=& R_1+R_2\j \in  \QR^{p \times n}, \quad D= S_1+S_2\j \in \QR^{p \times d}. \label{eq4.2}
	\end{eqnarray}
	We will limit our discussion to the scenario where $m \geq n+d$, and matrix $C^C_c$ has full row rank. The RBLSE problem can then be stated as follows:
	\begin{equation}\label{eq5.1}
		\min_{X \in \C^{n \times d}}\norm{AX-B}_F \quad \textrm{subject to} \quad  CX=D. 
	\end{equation} 
	Consider a complex LSE problem 
	\begin{equation}\label{eq5.2}
		\min_{X \in \C^{n \times d}}\norm{A^C_cX-B^C_c}_F \quad \textrm{subject to} \quad  C^C_cX=D^C_c.
	\end{equation}
	
	\begin{theorem}\label{thm4.3}
		Consider the RBLSE problem outlined in \eqref{eq5.1} and the complex LSE problem in \eqref{eq5.2}. Let the QR factorization of \( (C^C_c)^H \) be given by:
			\begin{equation}\label{eq5.3}
				\left(C^C_c\right)^H = \widetilde{Q}\begin{bmatrix}
					\widetilde{R} \\
					0             
				\end{bmatrix},
			\end{equation}
			where \( \widetilde{Q} \in \mathbb{C}^{n \times n} \) is a unitary matrix and \( \widetilde{R} \in \mathbb{C}^{2p \times 2p} \) is a nonsingular upper triangular matrix. Further, partition the product \( A^C_c \widetilde{Q} \) as:
			\begin{equation}\label{eq5.4}
				A^C_c \widetilde{Q} = \left[\widetilde{P}_1, \widetilde{P}_2\right],
			\end{equation}
			where \( \widetilde{P}_1 \in \mathbb{C}^{2m \times 2p} \) and \( \widetilde{P}_2 \in \mathbb{C}^{2m \times (n - 2p)} \).
		
		Let \( X \in \mathbb{C}^{n \times d} \). Then, $X$ is a complex solution of the RBLSE problem \eqref{eq5.1} if and only if \( X \) solves the complex LSE problem \eqref{eq5.2}. In this scenario, the unique solution \( X \) with minimum norm can be expressed as:
		\begin{equation}\label{eq5.5}
			X = \widetilde{Q} \begin{bmatrix}
				(\widetilde{R}^H)^{-1} D^C_c \\ 
				\widetilde{P}_2^\dagger \big( B^C_c - \widetilde{P}_1 (\widetilde{R}^H)^{-1} D^C_c \big)
			\end{bmatrix}. 
		\end{equation}
	\end{theorem}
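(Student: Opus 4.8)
The plan is to follow the same strategy used in the proof of Theorem \ref{thm3.3}, but now working with the complex representation instead of the real one. The core idea is to show that the RBLSE problem \eqref{eq5.1} over $\C$ is equivalent to the complex LSE problem \eqref{eq5.2}, and then invoke Lemma \ref{thm2.4} to write down the closed-form minimum-norm solution.

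First I would establish the norm equivalence. Starting from the assumption that $X \in \C^{n\times d}$ solves \eqref{eq5.2}, I would use the fact that the Frobenius norm of a complex matrix is invariant under multiplication by the unitary matrix $P_m$ from \eqref{Pm}, so that $\norm{A^C_cX-B^C_c}_F = \norm{P_m(A^C_cX-B^C_c)}_F$. Then, applying Lemma \ref{lem22} together with the block-column representation \eqref{eq2.4} and the identity \eqref{er}, I would rewrite
\begin{equation*}
	\norm{A^C_cX-B^C_c}_F = \tfrac{1}{\sqrt{2}}\norm{\left[(A^C_cX-B^C_c),\, P_m(A^C_cX-B^C_c)\right]}_F = \tfrac{1}{\sqrt{2}}\norm{A^C X^C - B^C}_F,
\end{equation*}
where $X^C$ appears as the block-diagonal $\begin{bmatrix} X & 0 \\ 0 & X\end{bmatrix}$ because $X$ is complex. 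Using Lemma \ref{lem2.3}$(d)$ this equals $\tfrac{1}{\sqrt{2}}\norm{(AX-B)^C}_F = \norm{AX-B}_F$ by \eqref{er}. An analogous computation applied to the constraint, using $C^C = [C^C_c,\, P_p C^C_c]$ from \eqref{eq2.4}, converts $C^C_c X = D^C_c$ into $C^C X^C = D^C$, i.e. $(CX)^C = D^C$, which by Lemma \ref{lem2.3}$(a)$ is equivalent to $CX = D$. This gives the "if and only if" equivalence in both directions.

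Once the equivalence is in place, the formula \eqref{eq5.5} follows by applying Lemma \ref{thm2.4} directly to the complex LSE problem \eqref{eq5.2}: here $A^C_c \in \C^{2m\times n}$, $B^C_c \in \C^{2m\times d}$, $C^C_c \in \C^{2p\times n}$, $D^C_c \in \C^{2p\times d}$ play the roles of $A,B,C,D$, the hypothesis that $C^C_c$ has full row rank matches the full-row-rank assumption of the lemma (so $\widetilde{R}$ is the nonsingular $2p\times 2p$ upper triangular factor), and the partition \eqref{eq5.4} matches \eqref{eq2.4.3}. Substituting into \eqref{eq2.4.4} yields exactly \eqref{eq5.5}, and uniqueness of the minimum-norm solution is inherited from the lemma.

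The main obstacle I anticipate is bookkeeping rather than anything conceptual: one must be careful that the block-diagonal "$X^C$-like" matrix built from $X$ genuinely coincides with $X^C$ in the sense of \eqref{eq2.2} — this works precisely because $X$ is a complex (hence $\j$-free) matrix, so its complex representation is the block-diagonal $\mathrm{diag}(X,X)$ — and that the dimension counts ($2m \geq n + 2p$ or whatever is needed for $\widetilde{P}_2^\dagger$ to behave as a genuine least-squares pseudoinverse) are consistent with the standing assumption $m \geq n+d$ and full row rank of $C^C_c$. I would also double-check that Lemma \ref{lem22}'s equal-norm hypothesis is satisfied, which it is since $P_m$ is a permutation and hence $\norm{A^C_cX-B^C_c}_F = \norm{P_m(A^C_cX-B^C_c)}_F$ automatically.
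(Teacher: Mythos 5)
Your proposal is correct and follows essentially the same route as the paper's proof: unitary invariance under $P_m$, Lemma \ref{lem22} with the block-column identities \eqref{eq2.4} and \eqref{er} to show $\norm{A^C_cX-B^C_c}_F=\norm{AX-B}_F$ and $C^C_cX=D^C_c \iff CX=D$ via $X^C=\mathrm{diag}(X,X)$, followed by Lemma \ref{thm2.4} for the closed-form minimum-norm solution. Your added remark that the block-diagonal structure of $X^C$ relies on $X$ being $\j$-free is a point the paper uses implicitly without comment.
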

	\begin{proof}
		If $X \in \C^{n \times d}$ is a solution of the complex LSE problem \eqref{eq5.2}, then 
		\begin{equation}\label{p1}
			\norm{A^C_cX-B^C_c}_F= \textrm{min},  \quad  C^C_cX=D^C_c.
		\end{equation}
		The Frobenius norm of a complex matrix remains invariant under unitary transformations. Since the matrix $P_m$ in \eqref{Pm} is unitary, it follows that:
		\begin{equation*}
			\norm{A^C_cX-B^C_c}_F = \norm{P_m\left(A^C_cX-B^C_c\right)}_F .
		\end{equation*}
		Using equations \eqref{eq2.2}, \eqref{eq2.4}, \eqref{er}, along with Lemmas \ref{lem22} and \ref{lem2.3}, we obtain
		\begin{eqnarray*}
			\norm{A^C_cX-B^C_c}_F &=& \frac{1}{\sqrt{2}}\norm{\left[\left(A^C_cX-B^C_c\right), P_m\left(A^C_cX-B^C_c\right)\right]}_F \\
			&=& \frac{1}{\sqrt{2}} \norm{\left[A^C_cX, P_mA^C_cX\right]-\left[B^C_c, P_mB^C_c\right]}_F \\
			&=& \frac{1}{\sqrt{2}} \norm{\left[A^C_c, P_mA^C_c\right]\begin{bmatrix}
					X & 0 \\
					0 & X
				\end{bmatrix}-\left[B^C_c, P_mB^C_c\right]}_F \\
			&=&  \frac{1}{\sqrt{2}} \norm{A^C X^C-B^C}_F \\
			&=&  \frac{1}{\sqrt{2}} \norm{\left(A X-B\right)^C}_F \\
			&=&  \norm{A X-B}_F.
		\end{eqnarray*}
		From \eqref{p1}, we obtain
		\begin{equation}\label{p2}
			\norm{A^C_cX-B^C_c}_F  = \norm{A X-B}_F  = \textrm{min},
		\end{equation}
		and
		\begin{equation*}
			\left[C^C_c, P_p C^C_c\right]	\begin{bmatrix}
				X & 0  \\
				0 & X 
			\end{bmatrix} = \left[D^C_c, P_p D^C_c\right].
		\end{equation*}
		Using \eqref{eq2.4}, we know that $C^C = \left[C^C_c, P_p C^C_c\right]$ and $D^C=\left[D^C_c, P_p D^C_c\right]$. Applying this, we get
		\begin{eqnarray}
			C^C X^C&=& D^C, \nonumber\\
			\left(CX\right)^C&=& D^C, \nonumber\\
			CX&=& D. \label{p3}
		\end{eqnarray}
		By combining \eqref{p2} and \eqref{p3}, we conclude that 
		$X  \in  \C^{n \times d}$ is a complex solution to the RBLSE problem \eqref{eq5.1}, and vice versa.
		
		To find the expression for $X$, we solve the complex LSE problem \eqref{eq5.2}. Using Lemma \ref{thm2.4}, we derive the explicit expression for \( X \).
		\end{proof}
	
	Next, we aim to examine how perturbations in $A$, $B$, $C$, and $D$ affect the complex solution $X_{CL}$ of the RBLSE problem  \eqref{eq5.1}. Let 
	\begin{equation}\label{rdeta2}
		\widehat{A} = A+\Delta A, \quad \widehat{B}=B+\Delta B, \quad \widehat{C}=C+\Delta C, \quad \textrm{and} \quad \widehat{D}=D+\Delta D,
	\end{equation}	
	where $\Delta A$, $\Delta B$, $\Delta C$, and $\Delta D$ represent the perturbations of the input data $A$, $B$, $C$, and $D$, respectively. We assume that the perturbations $\Delta A$, $\Delta B$, $\Delta C$, and $\Delta D$ are small enough to guarantee that the perturbed matrix $\widehat{C}^C_c$ retains full row rank. These perturbations are measured normwise by the smallest $\epsilon$ for which
	\begin{equation}\label{deta1}
		\norm{\Delta A}_F \leq \epsilon \norm{A}_F, \quad \norm{\Delta B}_F \leq \epsilon \norm{B}_F, \quad \norm{\Delta C}_F \leq \epsilon \norm{C}_F, \quad \textrm{and} \quad \norm{\Delta D}_F \leq \epsilon \norm{D}_F.
	\end{equation}
	Let $\widehat{X}_{CL}$ be the complex solution to the perturbed RBLSE problem 
	\begin{equation}\label{per1}
		\min_{X \in \C^{n \times d}}\norm{\widehat{A}X-\widehat{B}}_F \quad  \textrm{subject to} \quad \widehat{C}X=\widehat{D},
	\end{equation}
	and let $\Delta X_{CL}= \widehat{X}_{CL}-X_{CL}$. 
	\begin{theorem}\label{thm4.7}
		Consider the RBLSE problem outlined in \eqref{eq5.1} and the perturbed RBLSE problem described in \eqref{per1}. If the perturbations $\Delta A$, $\Delta B$, $\Delta C$, and $\Delta D$ are sufficiently small, as described in \eqref{deta1}, then we have
		\begin{equation}\label{ucl}
			\begin{split}
				\frac{\norm{\Delta X_{CL}}_F}{\norm{X_{CL}}_F} \leq \epsilon &\left(\mathcal{K}_A^C\left(\frac{\norm{D^C_c}_F}{\norm{C^C_c}_F\norm{X_{CL}}_F}+1\right) + \mathcal{K}_{B}^C\left(\frac{\norm{B^C_c}_F}{\norm{A^C_c}_F \norm{X_{CL}}_F}+1\right)\right. \\
				&\left. +\left(\mathcal{K}_B^C\right)^2\left(\frac{\norm{C^C_c}_F}{\norm{A^C_c}_F}\norm{A^C_c\mathcal{L}_c}_2+1\right)\frac{\norm{R_c}_F}{\norm{A^C_c}_F\norm{X_{CL}}_F}\right)+O(\epsilon^2) \equiv U_{CL},
			\end{split}
		\end{equation}
		where
		\begin{eqnarray*}
			\mathcal{K}_B^C&=&\norm{A^C_c}_F \norm{\left(A^C_cP_c\right)^{\dagger}}_2, \; \; \; \mathcal{K}_A^C=\norm{C^C_c}_F \norm{\mathcal{L}_c}_2, \; \; \;  \mathcal{L}_c =\left(I_n-\left(A^C_cP_c\right)^{\dagger}A^C_c\right)\left(C^C_c\right)^{\dagger}, \; \; \; \;  \\     P_c&=&I_n-\left(C^C_c\right)^{\dagger}C^C_c, \; \; \; \; \; \; \; \; \; \; \; R_c=B^C_c-A^C_cX_{CL}.
		\end{eqnarray*}
	\end{theorem}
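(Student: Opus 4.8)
The plan is to mirror exactly the strategy already used for the real case in Theorem \ref{thm4.8}: reduce the perturbation analysis of the complex solution $X_{CL}$ of the RBLSE problem \eqref{eq5.1} to a known perturbation bound for an ordinary complex LSE problem, namely the bound in \cite[Equation $4.11$]{MR1682424}. The key algebraic fact that makes this reduction legitimate is Theorem \ref{thm4.3}, which establishes that $X_{CL}$ (respectively $\widehat{X}_{CL}$) solves \eqref{eq5.1} (respectively \eqref{per1}) if and only if it solves the complex LSE problem \eqref{eq5.2} (respectively its perturbed counterpart). So the whole argument is: translate the perturbed RBLSE problem into a perturbed complex LSE problem, check that the perturbations of the complexified data are controlled by the same $\epsilon$, and invoke the cited first-order bound verbatim.

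First I would write down the perturbed complex LSE problem corresponding to \eqref{per1}, namely
\begin{equation*}
	\min_{X \in \C^{n \times d}}\norm{\left(\widehat{A}\right)^C_cX-\left(\widehat{B}\right)^C_c}_F \quad \textrm{subject to} \quad \left(\widehat{C}\right)^C_cX=\left(\widehat{D}\right)^C_c,
\end{equation*}
and use Lemma \ref{lem2.3}(b) together with the definition of $M^C_c$ to split $\left(\widehat{A}\right)^C_c = A^C_c+\left(\Delta A\right)^C_c$, and similarly for $B$, $C$, $D$. By Theorem \ref{thm4.3}, $\widehat{X}_{CL}$ is precisely the minimum-norm solution of this perturbed complex LSE problem, and $X_{CL}$ is the minimum-norm solution of \eqref{eq5.2}. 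Next I would use the norm identity \eqref{er}, i.e.\ $\norm{M}_F = \norm{M^C_c}_F$, to convert the normwise perturbation bounds \eqref{deta1} into
\begin{equation*}
	\norm{\left(\Delta A\right)^C_c}_F \leq \epsilon \norm{A^C_c}_F, \quad \norm{\left(\Delta B\right)^C_c}_F \leq \epsilon \norm{B^C_c}_F, \quad \norm{\left(\Delta C\right)^C_c}_F \leq \epsilon \norm{C^C_c}_F, \quad \norm{\left(\Delta D\right)^C_c}_F \leq \epsilon \norm{D^C_c}_F,
\end{equation*}
so the perturbation level is preserved under complexification. The assumption that $\widehat{C}^C_c$ retains full row rank guarantees the perturbed complex LSE problem is well posed, and smallness of $\epsilon$ is what licenses the $O(\epsilon^2)$ remainder.

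With these ingredients, the final step is to apply the standard first-order perturbation bound for complex equality-constrained least squares from \cite[Equation $4.11$]{MR1682424} to the pair \eqref{eq5.2} and its perturbation, with the data matrices $A^C_c, B^C_c, C^C_c, D^C_c$ playing the roles of $A, B, C, D$; the quantities $\mathcal{K}_A^C, \mathcal{K}_B^C, \mathcal{L}_c, P_c, R_c$ defined in the statement are exactly the instantiations of the condition-number-type quantities appearing in that reference. Substituting gives \eqref{ucl}. I do not expect a genuine obstacle here, since the real-case proof of Theorem \ref{thm4.8} goes through identically with $Q_m,R_m,S_m$ (three orthogonal matrices, factor $\tfrac12$) replaced by the single unitary $P_m$ (factor $\tfrac{1}{\sqrt2}$) — the only thing to be careful about is that the equivalence in Theorem \ref{thm4.3} is used for both the original and the perturbed problem, and that the cited bound \cite[Eq.\ 4.11]{MR1682424} is stated for complex (not just real) data, which it is. The mildly delicate point, if any, is simply making sure the dimension bookkeeping ($C^C_c \in \C^{2p \times n}$, $\widetilde{R}\in\C^{2p\times 2p}$, $m \geq n+d$ ensuring $A^C_c P_c$ has the right rank for its pseudoinverse to behave) is consistent with the hypotheses under which \cite[Eq.\ 4.11]{MR1682424} is valid.
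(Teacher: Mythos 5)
Your proposal follows essentially the same route as the paper's own proof: write the perturbed complex LSE problem corresponding to \eqref{per1}, split the complexified data via Lemma \ref{lem2.3}, transfer the perturbation bounds \eqref{deta1} through the norm identity \eqref{er}, invoke the equivalence of Theorem \ref{thm4.3} for both the original and perturbed problems, and then apply the bound from \cite[Equation 4.11]{MR1682424} to the complex LSE data $A^C_c, B^C_c, C^C_c, D^C_c$. No substantive differences to report.
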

	\begin{proof}
		The perturbed complex LSE problem corresponding to the perturbed RBLSE problem \eqref{per1} is given by:
		\begin{equation}\label{per2}
			\min_{X \in \C^{n \times d}}\norm{\left(\widehat{A}\right)^C_cX-\left(\widehat{B}\right)^C_c}_F \quad \textrm{subject to} \quad \left(\widehat{C}\right)^C_cX=\left(\widehat{D}\right)^C_c.
		\end{equation}
		Using Theorem \ref{thm4.3}, we know that $\widehat{X}_{CL}$ is the solution to the perturbed complex LSE problem \eqref{per2}. From \eqref{rdeta2} and utilizing Lemma \ref{lem2.3}, we have
		$$\left(\widehat{A}\right)^C_c = A^C_c+\left(\Delta A\right)^C_c, \quad \left(\widehat{B}\right)^C_c = B^C_c+\left(\Delta B\right)^C_c, \quad \left(\widehat{C}\right)^C_c = C^C_c+\left(\Delta C\right)^C_c, \quad \left(\widehat{D}\right)^C_c = D^C_c+\left(\Delta D\right)^C_c.$$ 
		Thus, the perturbed complex LSE problem \eqref{per2} can be rewritten as:
		\begin{equation}\label{prper1}
			\min_{X \in \C^{n \times d}}\norm{\left(A^C_c+\left(\Delta A\right)^C_c\right)X-\left(B^C_c+\left(\Delta B\right)^C_c\right)}_F \quad  \textrm{subject to} \quad \left(C^C_c+\left(\Delta C\right)^C_c\right)X=\left(D^C_c+\left(\Delta D\right)^C_c\right).
		\end{equation}
		Using \eqref{er} and \eqref{deta1}, we can establish the following bounds for the perturbation:
		\begin{equation}\label{prper2}
			\norm{\left(\Delta A\right)^C_c}_F \leq \epsilon \norm{A^C_c}_F, \quad \norm{\left(\Delta B\right)^C_c}_F \leq \epsilon \norm{B^C_c}_F, \quad \norm{\left(\Delta C\right)^C_c}_F \leq \epsilon \norm{C^C_c}_F, \quad \norm{\left(\Delta D\right)^C_c}_F \leq \epsilon \norm{D^C_c}_F.
		\end{equation}
		With the perturbed problem \eqref{prper1} and the bounds in \eqref{prper2}, and using Theorem \ref{thm4.3}, the sensitivity analysis of the complex solution to the RBLSE problem \eqref{eq5.1} reduces to evaluating the sensitivity of the solution to the complex LSE problem \eqref{eq5.2}. Consequently, the upper bound $U_{CL}$ for the relative forward error of the complex solution to the RBLSE problem can be obtained from \cite[Equation $4.11$]{MR1682424}.
	\end{proof}

	\section{Numerical Verification}\label{sec5}
 In this section, we present numerical algorithms for computing two special solutions (real and complex) of the RBLSE problem. Each algorithm is complemented with a detailed computational complexity analysis through a step-by-step flop count. Additionally, we validate these algorithms using numerical examples to assess their performance and reliability.
		\begin{algorithm} [H]
		\caption{For the real solution of the RBLSE problem \eqref{eq6.1}}\label{alg2}
		\vspace{0.2cm}
		\textbf{Input:} $A= A_0+ A_1\i+ A_2\j+ A_3\k \in \QR^{m \times n}$, $B= B_0+ B_1\i+ B_2\j+ B_3\k \in \QR^{m \times d}$, $C= C_0+ C_1\i+ C_2\j+ C_3\k \in  \QR^{p \times n}$, and $D= D_0+ D_1\i+ D_2\j+ D_3\k \in \QR^{p \times d}.$ We assume $m \geq n+d$ and that matrix $C^R_c$ has full row rank.\\
		\textbf{Output:} $X \in \R^{n \times d}$.
		\begin{enumerate}[noitemsep]
			\item Find the QR factorization of $\left(C^R_c\right)^T$, as described in \eqref{eq6.3}.
			\item Partition the matrix $A^R_c\bar{Q}$ as shown in \eqref{eq6.4}. 
			\item The real solution $X$ to the RBLSE problem \eqref{eq6.1} is given by \eqref{eq6.5}. 
		\end{enumerate}
		\vspace{-0.2cm}
	\end{algorithm}
	\begin{algorithm} [H]
		\caption{For the complex solution of the RBLSE problem \eqref{eq5.1}}\label{alg1}
		\vspace{0.2cm}
		\textbf{Input:} $A= M_1+ M_2\j \in \QR^{m \times n}$, $B= N_1+ N_2\j \in \QR^{m \times d}$, $C= R_1+ R_2\j \in  \QR^{p \times n}$, and $D= S_1+ S_2\j \in \QR^{p \times d}.$ We assume $m \geq n+d$ and that matrix $C^C_c$ has full row rank.\\
		\textbf{Output:} $X \in \C^{n \times d}$.
		\begin{enumerate}[noitemsep]
			\item Find the QR factorization of $\left(C^C_c\right)^H$, as described in \eqref{eq5.3}.
			\item Partition the matrix $A^C_c\widetilde{Q}$ as shown in \eqref{eq5.4}. 
			\item The complex solution $X$ to the RBLSE problem \eqref{eq5.1} is given by \eqref{eq5.5}.
		\end{enumerate}
		\vspace{-0.2cm}
	\end{algorithm}

		Next, to assess the performance of the algorithms, a step-by-step flop count is presented. This detailed analysis highlights the computational cost at each stage, providing insights into the efficiency of the algorithm.
		First, to analyze the efficiency of the real solution algorithm, a detailed step-by-step flop count is provided below.
	\begin{table}[H]
			\centering
			\renewcommand{\arraystretch}{1.5} % Increases row spacing for better readability
			\begin{tabular}{@{}p{0.15\linewidth}@{\hspace{0.02\linewidth}}p{0.35\linewidth}@{\hspace{0.01\linewidth}}p{0.45\linewidth}@{}}
				\toprule
				\textbf{Step} & \textbf{Description} & \textbf{Flop Count} \\
				\midrule
				1 & QR decomposition of $\left(C^R_c\right)^T$ & $O(32np^2)$ \\
				
				2 & Compute $A^R_c \bar{Q}$ & $O(8m n^2 - 4mn)$ \\
				
				3 & Partition $A^R_c \bar{Q}$ into $\bar{P}_1$ and $\bar{P}_2$  & $O(1)$ \\
				
				4 & Compute $\bar{P}_2^\dagger$ &  
				$O\big(24m(n - 4p)^2  + 10 (n - 4p)^3\big)$ \\
				
				5 & Solve $\left(\bar{R}^T\right)^{-1}D^R_c$  & $O(16p^2d)$ \\
				
				6 & Compute $\bar{P}_1 \left(\bar{R}^T\right)^{-1}D^R_c$ & $O(32mpd-4md)$ \\
				
				7 & Compute $B^R_c - \bar{P}_1 \left(\bar{R}^T\right)^{-1}D^R_c$  & $O(4md)$ \\
				
				8 & Compute $\bar{P}_2^\dagger (B^R_c - \bar{P}_1 \left(\bar{R}^T\right)^{-1}D^R_c)$  & $O(8m(n - 4p)d)$ \\
				
				9 & Compute $X $  & $O(2n^2d)$ \\
				
				\midrule
				& \textbf{Total Flop Count}   & 
				$O\big(
				32np^2 + 8mn^2 - 4mn+ 24m(n - 4p)^2  + 10 (n - 4p)^3 
				+ 16p^2d + 32mpd-4md + 4md + 8m(n - 4p)d + 2n^2d
				\big)$ \\
				\bottomrule
			\end{tabular}
			\caption{Flop count for the computational steps to find the real solution of the RBLSE problem.}
		\label{tab:flop_count}
	\end{table}
	To analyze the efficiency of the complex solution algorithm, a detailed step-by-step flop count is provided below. This analysis emphasizes the additional computations required due to the complex structure.
	\begin{table}[H]
			\centering
			\renewcommand{\arraystretch}{1.5} % Increases row spacing for better readability
			\begin{tabular}{@{}p{0.15\linewidth}@{\hspace{0.02\linewidth}}p{0.35\linewidth}@{\hspace{0.01\linewidth}}p{0.45\linewidth}@{}}
				\toprule
				\textbf{Step} & \textbf{Description} & \textbf{Flop Count} \\
				\midrule
				1 & QR decomposition of $\left(C^C_c\right)^H$ & $O(32np^2)$ \\
				
				2 & Compute $A^C_c \widetilde{Q}$ & $O(16mn^2-4mn)$ \\
				
				3 & Partition $A^C_c \widetilde{Q}$ into $\widetilde{P}_1$ and $\widetilde{P}_2$ & $O(1)$ \\
				
				4 & Compute $\widetilde{P}_2^\dagger$ &  
				$O\big(48m(n - 2p)^2  +40 (n - 2p)^3\big)$ \\
				
				5 & Solve $\left(\widetilde{R}^H\right)^{-1} D^C_c$ & $O(16p^2d+10pd)$ \\
				
				6 & Compute $\widetilde{P}_1 \left(\widetilde{R}^H\right)^{-1} D^C_c$ & $O(32mpd-4md)$ \\
				
				7 & Compute $B^C_c - \widetilde{P}_1 \left(\widetilde{R}^H\right)^{-1} D^C_c$ & $O(4md)$ \\
				
				8 & Compute $\widetilde{P}_2^\dagger (B^C_c - \widetilde{P}_1 \left(\widetilde{R}^H\right)^{-1} D^C_c)$ & $O(16m(n - 2p)d)$ \\
				
				9 & Compute $X $ & $O(8n^2d)$ \\
				
				\midrule
				& \textbf{Total Flop Count} & 
				$O\big(
				32np^2 + 16mn^2-4mn + 48m(n - 2p)^2  + 40 (n - 2p)^3 
				+ 16p^2d +10pd+ 32mpd-4md + 4md + 16m(n - 2p)d + 8n^2d
				\big)$ \\
				\bottomrule
			\end{tabular}
			\caption{Flop count for the computational steps to find the complex solution of the RBLSE problem.}
		\label{tab:flop_count_complex}
	\end{table}

	Next, we provide examples to evaluate the effectiveness of the proposed algorithms. All computations were performed using a computer equipped with an Intel Core $i9-12900K$ processor at $3200$ MHz and $32$ GB of RAM, running MATLAB $R2024b$ software.
	
		The following example validates the efficiency of the algorithms by comparing the real and complex solutions in terms of computational accuracy and CPU runtime.
		\begin{exam} \label{ex1}
			To evaluate the accuracy and performance of the proposed algorithms for solving the RBLSE problem, we use random data matrices of varying sizes. The problem matrices are defined as follows:
			\begin{eqnarray*}
				A &=& A_0 + A_1\i + A_2\j + A_3\k = M_1 + M_2\j \in \QR^{m \times n}, \quad
				B = B_0 + B_1\i + B_2\j + B_3\k = N_1 + N_2\j \in \QR^{m \times d}, \\
				C &=& C_0 + C_1\i + C_2\j + C_3\k = R_1 + R_2\j \in \QR^{p \times n}, \quad
				D = D_0 + D_1\i + D_2\j + D_3\k = S_1 + S_2\j \in \QR^{p \times d}. 
			\end{eqnarray*}
			The real matrix components \( A_i, B_i, C_i, \) and \( D_i \) are generated using random values:
			\begin{align*}
				A_i &= rand(m, n) \in \R^{m \times n}, \quad i = 0,1,2,3, \\
				B_i &= rand(m, d) \in \R^{m \times d}, \quad i = 0,1,2,3, \\
				C_i &= rand(p, n) \in \R^{p \times n}, \quad i = 0,1,2,3, \\
				D_i &= rand(p, d) \in \R^{p \times d}, \quad i = 0,1,2,3.
			\end{align*}
			The matrix dimensions are determined by \( m = 30t \), \( n = 10t \), \( p = 2t \), and \( d = 2 \), where \( t \) is a positive integer. We apply Algorithms \ref{alg2} and \ref{alg1} to compute the real and complex solutions of the RBLSE problem, respectively.
			\begin{enumerate}
				\item[(a)] Let \( X_{RL} \) be the real solution of the RBLSE problem:
				\[
				\min_{X, R_r} \|R_r\|_F \quad \text{subject to} \quad A X = B + R_r, \quad C X = D.
				\]
				The accuracy of \( X_{RL} \) is measured using the following errors:
				\[
				\epsilon_1 = \log_{10}(\|A X_{RL} - (B + R_r)\|_F), \quad \epsilon_2 = \log_{10}(\|C X_{RL} - D\|_F).
				\]
				
				\item[(b)] Let \( X_{CL} \) be the complex solution of the RBLSE problem:
				\[
				\min_{X, R_c} \|R_c\|_F \quad \text{subject to} \quad A X = B + R_c, \quad C X = D.
				\]
				The accuracy of \( X_{CL} \) is measured using:
				\[
				\epsilon_3 = \log_{10}(\|A X_{CL} - (B + R_c)\|_F), \quad \epsilon_4 = \log_{10}(\|C X_{CL} - D\|_F).
				\]
				\end{enumerate}
			To thoroughly evaluate the computational performance, the CPU times for each solution are recorded and averaged over $50$ trials to minimize variability. The average CPU times (in seconds) for computing \( X_{RL} \) and \( X_{CL} \) are denoted by \( t_r \) and \( t_c \), respectively.
		\end{exam}
		\begin{table}[H]
				\centering
				\begin{tabular}{c c c c c } 
					\toprule
					\( t \) & \( \epsilon_1 \) & \( \epsilon_2 \) & \( \epsilon_3 \) & \( \epsilon_4 \)  \\
					\midrule
					\( 1 \)  & \( -15.1057 \) & \( -15.0513 \) & \( -15.5746 \) & \( -15.2797 \) \\
					\( 3 \)  & \( -15.1474 \) & \( -14.5131 \) & \( -15.4137 \) & \( -14.7818 \) \\
					\( 5 \) & \( -14.9570 \) & \( -14.4140 \) & \( -15.3090 \) & \( -14.4512 \)  \\
					\( 7 \) & \( -14.8277 \) & \( -14.2473 \) & \( -15.3418 \) & \( -14.5228 \)  \\
					\( 9 \)  & \( -14.8800 \) & \( -14.0209 \) & \( -15.3164 \) & \( -14.2537 \) \\
					\bottomrule
				\end{tabular}
				\caption{Accuracy of Algorithms \ref{alg2} and \ref{alg1} in computing the real and complex solutions of the RBLSE problem.}
				\label{tab12}
		\end{table}
		
		\noindent Table \ref{tab12} demonstrates that the two algorithms achieve high accuracy, with the computational errors consistently below \( -14 \) for all tested values of \( t \). This highlights the accuracy of the proposed methods.
		
		\begin{figure}[H]
				\centering
				\includegraphics[width=0.8\textwidth]{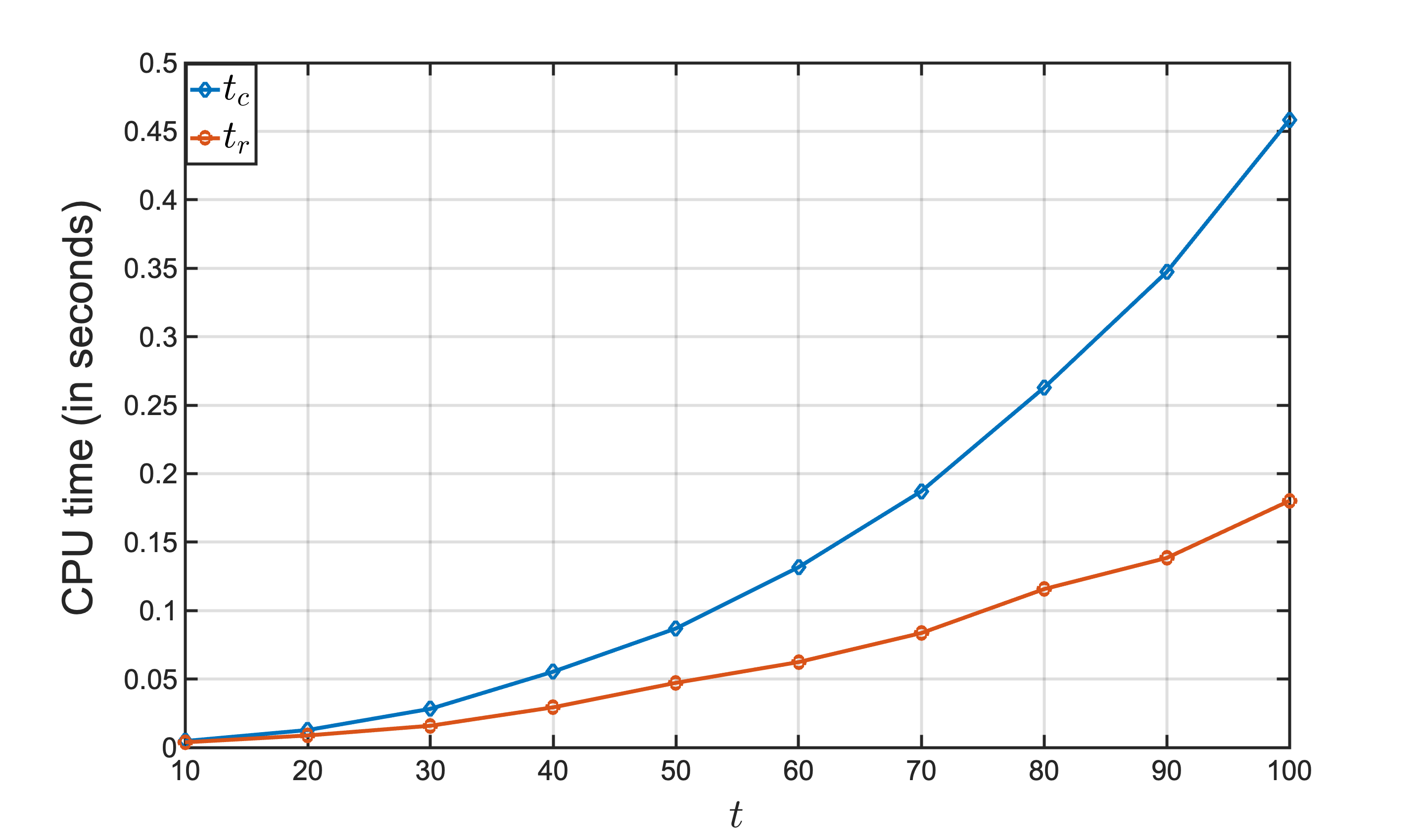} % Adjust the width as needed
				\caption{Comparison of CPU time for computing real and complex solution of RBLSE problem}
				\label{fig1}
		\end{figure}
		
		\noindent Figure \ref{fig1} compares the average CPU times for the two algorithms. The results show that the algorithm for computing real solution takes less time compared to the algorithm for computing the complex solution of the RBLSE problem. This is consistent with the theoretical complexity of the algorithms.
		
		The following example compares the theoretical upper bounds with the actual relative forward errors for the real and complex solutions of the RBLSE problem, demonstrating the reliability of the proposed bounds.
		\begin{exam}\label{ex1.1}
			Building on Example \ref{ex1}, we now introduce random perturbations \( \Delta A \), \( \Delta B \), \( \Delta C \), and \( \Delta D \) to the input matrices \( A \), \( B \), \( C \), and \( D \), respectively. These perturbations are used to examine the sensitivity of the RBLSE problem to small changes in the input data. Specifically, we evaluate how these perturbations affect the computed real solution \( X_{RL} \) and the complex solution \( X_{CL} \). The magnitude of these perturbations is measured normwise by the smallest \( \epsilon \), which ensures the changes are controlled and small.
			
			To quantify the impact of the perturbations, we compute the exact relative forward errors for each solution as follows:
			\[
			\frac{\|\widehat{X}_{RL} - X_{RL}\|_F}{\|X_{RL}\|_F} \quad 	\text{and} \quad 
			\frac{\|\widehat{X}_{CL} - X_{CL}\|_F}{\|X_{CL}\|_F}.
			\]
			Here, \( \widehat{X}_{RL} \) and \( \widehat{X}_{CL} \) denote the computed solutions of the perturbed RBLSE problem, while \( X_{RL} \) and \( X_{CL} \) are the original solutions.
			
			In addition to the exact relative forward errors, we compute the corresponding upper bounds \( U_{RL} \) and \( U_{CL} \) for these errors. These bounds are derived using equations \eqref{url} and \eqref{ucl}, respectively, which provide theoretical estimates of the maximum possible errors. 
			
			Table \ref{tab13} presents the exact relative forward errors and their corresponding upper bounds for various values of \( t \) and \( \epsilon \), allowing us to evaluate the relationship between the perturbation magnitude and the stability of the solutions.
		\end{exam}
		
		\begin{table}[H]
			\begin{center}
						\begin{tabular}{c c c c c c } 
							\toprule
							\( t \) & \( \epsilon \) & \( \frac{\|\widehat{X}_{RL} - X_{RL}\|_F}{\|X_{RL}\|_F} \) & \( U_{RL} \) & \( \frac{\|\widehat{X}_{CL} - X_{CL}\|_F}{\|X_{CL}\|_F} \) & \( U_{CL} \)  \\
							\midrule
							\( 1 \) & \( 1.2957 \times 10^{-13} \)  & \( 4.5138 \times 10^{-13} \) & \( 7.6470 \times 10^{-12} \) & \( 3.8724 \times 10^{-13} \) & \( 5.6680 \times 10^{-12} \)  \\ 
							& \( 1.3279 \times 10^{-10} \)  & \( 4.3088 \times 10^{-10} \) & \( 7.8370 \times 10^{-9} \) & \( 1.8961 \times 10^{-10} \) & \( 5.8088 \times 10^{-9} \)  \\ 
							& \( 1.2770 \times 10^{-7} \)  & \( 3.0154 \times 10^{-7} \)  & \( 7.5367 \times 10^{-6} \) & \( 2.8896 \times 10^{-7} \)  & \( 5.5863 \times 10^{-6} \)  \\ 
							\midrule       
							\( 5 \) & \( 1.2312 \times 10^{-11} \)  & \( 6.1760 \times 10^{-11} \) & \( 4.7674 \times 10^{-9} \) & \( 3.1584 \times 10^{-11} \) & \( 1.7960 \times 10^{-9} \)  \\ 
							& \( 1.5256 \times 10^{-9} \)   & \( 9.3350 \times 10^{-9} \)  & \( 5.9071 \times 10^{-7} \) & \( 4.8613 \times 10^{-9} \)  & \( 2.2254 \times 10^{-7} \)  \\ 
							& \( 1.4801 \times 10^{-8} \)   & \( 7.9283 \times 10^{-8} \)  & \( 5.7313 \times 10^{-6} \) & \( 3.3857 \times 10^{-8} \)  & \( 2.1592 \times 10^{-6} \)  \\ 
							\midrule
							\( 9 \) & \( 1.2645 \times 10^{-12} \)  & \( 6.5592 \times 10^{-12} \) & \( 4.5359 \times 10^{-10} \) & \( 4.0664 \times 10^{-12} \) & \( 2.4556 \times 10^{-10} \)  \\ 
							& \( 1.2593 \times 10^{-10} \)  & \( 6.6261 \times 10^{-10} \) & \( 4.5171 \times 10^{-8} \) & \( 4.0808 \times 10^{-10} \) & \( 2.4454 \times 10^{-8} \)  \\ 
							& \( 1.5340 \times 10^{-7} \)   & \( 7.1997 \times 10^{-7} \)  & \( 5.5023 \times 10^{-5} \) & \( 3.8176 \times 10^{-7} \)  & \( 2.9788 \times 10^{-5} \)  \\ 
							\bottomrule
						\end{tabular}
					\caption{Comparison of relative forward errors and their upper bounds for the real and complex solutions of a perturbed RBLSE problem.} 
					\label{tab13}
			\end{center}
		\end{table}
		
		\noindent Table \ref{tab13} illustrates that the exact relative forward errors for the real and complex solutions of the RBLSE problem are consistently smaller than their respective upper bounds \( U_{RL} \) and \( U_{CL} \), for all considered values of \( t \) and \( \epsilon \). These results validate the accuracy and reliability of the derived upper bounds.
		
		The following example evaluates the accuracy of the proposed algorithms for computing the real and complex solutions of the RBLSE problem.
		\begin{exam} \label{ex4.1}
			We define the problem parameters as follows:
			
			Let \( A \in \QR^{m \times n} \) and \( C \in \QR^{p \times n} \) as defined in Example \ref{ex1}. For \( i = 0, 1\), let \( X_i = rand(n, d) \in \R^{n \times d} \). Using these components, we define:
			\[
			X_R = X_0 \in \R^{n \times d}, \quad 
			X_C = X_0 + X_1\i \in \C^{n \times d}
			\]
			Here, the parameters \( m \), \( p \), \( n \), and \( d \) remain consistent with Example \ref{ex1} and depend on the variable \( t \).
			
			For the real solution, we set \( B = A X_R \) and \( D = C X_R \), which ensures that \( X_R \) is the exact real solution of the RBLSE problem. Algorithm \ref{alg2} is employed to compute the approximate real solution \( \widehat{X}_R \). The accuracy of the computed solution is evaluated using the Frobenius norm error:
			\[
			\epsilon_R = \|X_R - \widehat{X}_R\|_F.
			\]
			
			For the complex solution, we set \( B = A X_C \) and \( D = C X_C \), making \( X_C \) the exact complex solution of the RBLSE problem. Algorithm \ref{alg1} is used to compute the approximate complex solution \( \widehat{X}_C \), and the error is defined as:
			\[
			\epsilon_C = \|X_C - \widehat{X}_C\|_F.
			\]
			
			The variable \( t \) determines the dimensions of the matrices \( A \) and \( C \), and the errors \( \epsilon_R \) and \( \epsilon_C \) are computed for various values of \( t \). The results are summarized in Table \ref{tab16}, which shows the relationship between \( t \) and the computational accuracy of the algorithms.
		\end{exam}
		\begin{table}[H]
		\begin{center}
					\begin{tabular}{ c c c } % You can change the alignment and vertical lines as needed
						\toprule
						$t$ & $\epsilon_R = \norm{X_R-\widehat{X}_R}_F$ & $\epsilon_C = \norm{X_C-\widehat{X}_C}_F$  \\
						\midrule
						$1$               & $7.0655 \times 10^{-15} $     & $3.5401 \times 10^{-15} $ \\
						$3$               & $1.3490 \times 10^{-14} $     & $1.0711 \times 10^{-15} $ \\
						$5$               & $2.3275 \times 10^{-14} $     & $2.2491 \times 10^{-14} $ \\
						$7$               & $5.4268 \times 10^{-14} $     & $3.7398 \times 10^{-14} $ \\
						$9$              & $5.8569 \times 10^{-14} $     & $5.2257 \times 10^{-14} $ \\ 
						\bottomrule
					\end{tabular}
					\caption{Computational accuracy of Algorithms \ref{alg2} and \ref{alg1} for computing the real and complex solutions of the RBLSE problem.} \label{tab16}
			\end{center}
		\end{table}
		
		\noindent Table \ref{tab16} demonstrates that the errors \( \epsilon_R \) and \( \epsilon_C \) are consistently below \( 10^{-14} \) for all tested values of \( t \). These results confirm the high accuracy of the proposed algorithms for computing the real and complex solutions of the RBLSE problem, showcasing their effectiveness and precision across varying problem sizes.
	
	\section{Conclusions} \label{sec6}
	We have presented an algebraic method for solving the RBLSE problem by transforming it into real and complex  LSE problems. This transformation leverages the real and complex representations of reduced biquaternion matrices, enabling efficient computation of real and complex solutions to the RBLSE problem. Additionally, we have calculated the upper bound for the relative forward error associated with these solutions, demonstrating the accuracy and reliability of our method in addressing the RBLSE problem. Future research will focus on exploring their applications in image and signal processing.
	\section{Declarations}
	
	\subsection*{Author's Contributions}
	The first author played a crucial role in overseeing the project and providing valuable guidance. Neha Bhadala made significant contributions at various stages of the research, including conceptu-alization, methodology design, software implementation, validation, and formal analysis. Additio-nally, Neha Bhadala took the lead in drafting the original manuscript and actively participated in the subsequent writing and editing phases.
	\subsection*{Conflict of interest}
	This work does not have any conflicts of interest.
	\subsection*{Acknowledgement}
	Neha Bhadala expresses gratitude to the Government of India for funding her research under the Prime Minister’s Research Fellowship program (PMRF ID: $210059$).
	\bibliography{RBLSE_revision}
\end{document}